\newtheorem{theorem}{Theorem}
\newtheorem{lemma}{Lemma}
\newtheorem{definition}{Definition}
\newtheorem{assumption}{Assumption}
\newcommand{\Rmnum}[1]{\expandafter\@slowromancap\romannumeral #1@}
\begin{document}

\title{Distribution Systems Hardening against Natural Disasters}
%
%
%

\author{Yushi Tan, Arindam~K.~Das, Payman Arabshahi, and Daniel~S.~Kirschen}
\date{}

\maketitle

\begin{abstract}
Distribution systems are often crippled by catastrophic damage caused by a natural disaster. Well-designed hardening can significantly improve the performance of post-disaster restoration operations. Such performance is quantified by a resilience measure associated with the operability trajectory. The distribution system hardening problem can be formulated as a two-stage stochastic problem, where the inner operational problem addresses the proper sequencing of post-disaster repairs and the outer problem the judicious selection of components to harden. We propose a deterministic robust reformulation with two solution methods, an MILP formulation and a heuristic approach. We provide computational evidence on various IEEE test feeders which illustrates that the heuristic approach provides near-optimal hardening solutions efficiently.
\end{abstract}

\section{Introduction}

Natural disasters have caused major damage to electricity distribution networks and deprived homes and businesses of electricity for prolonged periods, for example Hurricane Sandy in November 2012~\cite{nerc2014sandy}, the Christchurch Earthquake in February 2011~\cite{eidinger2012christchurch} and the June 2012 Mid-Atlantic and Midwest Derecho~\cite{doe2012derecho}. Estimates of the annual cost of power outages caused by severe weather between 2003 and 2012 range from \$18 billion to \$33 billion on average\cite{eotp2013resiliency}. Physical damage to grid components must be repaired before power can be restored~\cite{gridwise2013resilience,  nerc2014sandy}. On the operational side, approaches have been proposed for scheduling the available repair crews in order to minimize the cumulative duration of customer interruption, which reduces the harm done to the affected community~\cite{tan2017scheduling, nurre2012restoring, coffrin2014transmission}. On the planning side, Kwasinski et al. \cite{eidinger2012christchurch} reported that facilities that had been upgraded or hardened in Christchurch at a cost of \$5 million, remained serviceable immediately after the September 2010 earthquake and saved approximately \$30 to \$50 million in subsequent repairs. Hardening minimizes the potential damages caused by disruptions, thereby facilitating restoration and recovery efforts, and the time it takes for the infrastructure system to resume operation~\cite{omer2013resilience}. However, as indicated in ~\cite{rollins2007hardening}, the difficulty of hardening does not lie in the design or construction of a hardened system, rather in the ability to quantify the expected performance improvement so that rational decisions can be made regarding increased cost versus potential future benefit. 
\subsection{Concept and quantification of resilience}
Resilience in infrastructure systems under natural disasters is an important current area of research. While several definitions of resilience have been proposed~\cite{mili2011taxonomy, bruneau2003framework, orourke2007critical, EPRI:2013tuba}, infrastructure resilience is typically defined as the ability to anticipate, prepare for, adapt to changing climate conditions and withstand, respond to, and recover rapidly from disruptions~\cite{obama2013preparing}. Resilience usually addresses the following four aspects - preparedness, robustness, resourcefulness and recovery, as illustrated in Fig.~\ref{fig:construct}, which is adapted from the `Resilience Construct' in~\cite{niac2010framework}. Since resourcefulness mainly depends on people instead of technology, the planning aspect, preparedness, should be guided by the other two operational aspects, robustness and recovery. 
\begin{figure}[htbp]
\centering
\includegraphics[trim = 100 120 100 250 ,clip, width = 1.0\columnwidth]{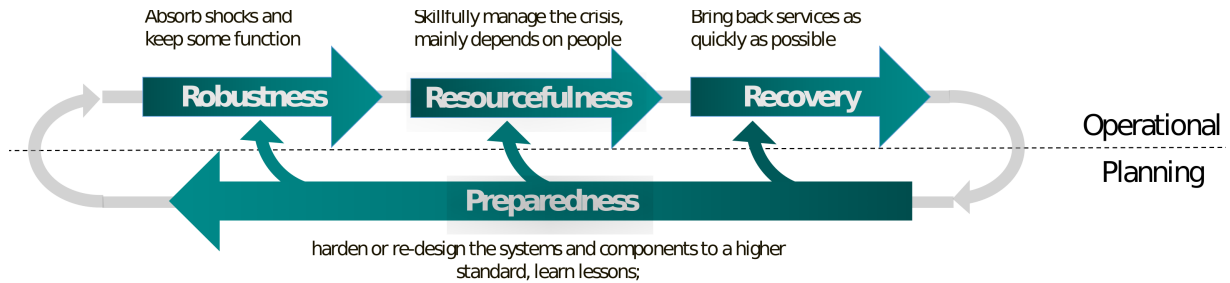}
\caption{Interactions between the four aspects of resilience}
\label{fig:construct}
\end{figure}

In a civil engineering context, resilience can be illustrated using the ``operability trajectory'', $Q(t)$, as shown in Figure~\ref{fig:operabtraj}, adopted  from~\cite{Reed:2009ekbaca}. The trajectory shows the increase in infrastructure functionality over time and is an effective visual indicator of the `goodness of the restoration process'. Robustness is quantified by the depth of functionality drop at time zero\footnote{Without any loss of generality, we assume that the restoration process commences at time $t=0$.}, while the quality of the recovery process is quantified by the ramp up time of the operability trajectory to full/satisfactory  functionality, post time zero. Obviously, we desire that an infrastructure system exhibit a relatively small drop in functionality at time zero and a quick ramp up time to full/satisfactory functionality, post time zero. Consequently, the ideal operability trajectory is defined by $Q_{ideal}(t) = 1, \, \forall t \geq 0$, assuming that operability is measured in fractional units instead of percentages. These two metrics can naturally be combined into an unifying measure of resilience~\cite{orourke2007critical}. Letting $T$ be some restoration time horizon, a resilience measure, $R$, can be defined as follows~\cite{Reed:2009ekbaca}: 
 \begin{align}
    R = \int_{0}^{T} Q(t) dt,
 \label{resilience_defn_civil}
 \end{align}
The closer $Q(t)$ is to $Q_{ideal}(t)$, the greater is the area under $Q(t)$, and therefore the greater is the resilience measure. 

Instead of maximizing the resilience measure defined in eqn.~\ref{resilience_defn_civil}, we could choose to minimize the quantity $\int_{0}^{T} Q_{ideal}(t) dt - \int_{0}^{T} Q(t) dt$, which is the area over the $Q(t)$ curve, bounded from above by $Q_{ideal}(t)$. This area, informally, the `other side of resilience', can be interpreted as a measure of `aggregate harm'. In a power system, it can be shown using the Lebesgue integral that minimizing this area is equivalent to minimizing the quantity $\sum_n w_n T_n$, where $w_n$ can be interpreted as the contribution of node $n$ to the overall loss in functionality of the system (or alternately, harm suffered by node $n$) and $T_n$ is the time to restore node $n$. Therefore, our objective for operational problems is to minimize the measure $\sum_n w_n T_n$, given a specific disaster scenario, while the objective for planning problems is to minimize $\sum_n w_n T_n$ in an expected sense, where the expectation is over all possible disaster scenarios.

\begin{figure}[htbp]
\centering
\includegraphics[width = 1.0\columnwidth]{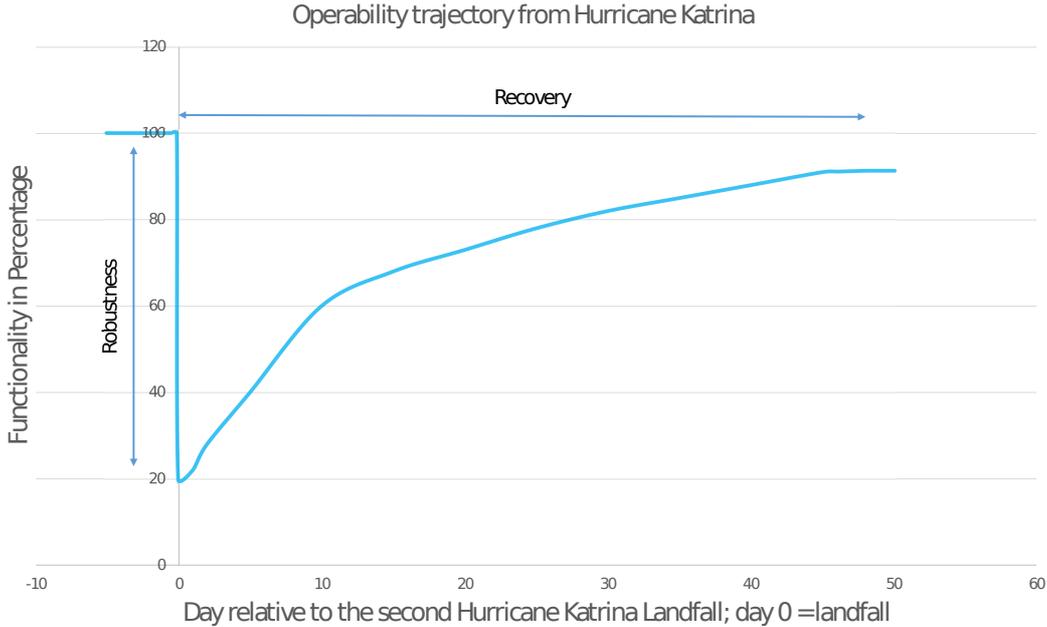}
\caption{Operability trajectory after Hurricane Katrina~\cite{Reed:2009ekbaca}.}
\label{fig:operabtraj}
\end{figure}
\subsection{Literature review}
\label{subsec:literature}
In recent years, several researchers have investigated different methods for hardening, but most focus solely on the robustness, i.e., worst-case load shed at the onset of disaster. Of note, a resilient distribution network planning problem (RDNP) was proposed in~\cite{yuan2016robust} to coordinate the hardening and distributed generation resource allocation. A tri-level defender-attacker-defender model is studied, in which the defender (hardening planner) selects a network hardening plan in the first stage, the attacker (natural disaster) disrupts the system with an interdiction budget, and finally, the defender (the distribution system operator) reacts by controlling DGs and switches in order to minimize the shed load. This model is improved in~\cite{ma2016resilience} by considering the investment cost and by eliminating the assumption that enhanced components should remain intact during any disaster scenario. Another direction of research enforces chance constraints on the loss of critical loads and normal loads respectively~\cite{yamangil2015resilient, nagarajan2016optimal}. A two-stage stochastic program and heuristic solution of hardening strategy were proposed in~\cite{romero2015seismic}, specifically for earthquake hazards, under the assumption that the repair times for similar types of components follow an uniform distribution, which simplifies the problem to a certain extent.
\subsection{Our approach}
To the best of our knowledge, this paper is the first to consider the restoration process in conjunction with hardening. Our approach can be seen as a two-stage stochastic problem. The first stage selects from the set of potential hardening choices and determines the extent of hardening to maximize the expected resilience measure $R$, while the second stage solves the operational problem in each possible scenario by optimizing the sequence of repairs given the hardening results. In the operational problem, we consider sequencing post-disaster repairs in distribution network with one repair crew.  Wang et al.  ~\cite{wangresearch} make a distinction between hardening activities and resiliency activities which are focused on the effectiveness of humans post-disaster. By assuming only one repair crew, we focus on the effects of network structure and components, and reduce the reliance on resourcefulness (i.e. the number of repair crews available). This issue will be discussed in more detail in Section~\ref{sec:solution}. Since an ideal formulation of the problem is hard to solve, we developed a reformulation and several heuristics techniques to solve the hardening problem.

The rest of the paper is organized as follows. In Section~\ref{sec:sqnc}, we  briefly review the problem of sequencing post-disaster repairs in distribution networks with one repair crew and discuss an MILP formulation and an optimal heuristic algorithm. In Section~\ref{sec:prob}, we formulate the problem of distribution system hardening against natural disasters, followed by a deterministic robust reformulation in Section~\ref{sec:reform}. Next, we motivate why we believe it is important to consider the restoration process in the hardening problem, the so called `restoration process aware hardening problem'. Two solution methods, an MILP formulation and an iterative heuristic  algorithm, are then discussed in Section~\ref{sec:solution}. The performance of these methods is validated by various case studies on standard IEEE test feeders in Section~\ref{sec:case}.
\section{Sequencing post-disaster repairs in distribution networks}
\label{sec:sqnc}
In this section, we briefly review the problem of sequencing post-disaster repairs in distribution networks with a single repair crew. Further details, including scheduling with multiple repair crews, can be found in~\cite{tan2017scheduling}.
\subsection{Distribution networks modeling}
A distribution network can be modeled by a graph $G$ with a set of nodes $N$ and a set of edges  $L$. Let $S \subset N$ represent the set of source nodes which are initially energized and $D = N \setminus S$ represent the set of sink nodes where consumers are located. An edge in $G$ represents a distribution feeder or some other connecting component. We assume that the network topology $G$ is radial, which is a valid assumption for many electricity distribution networks. Instead of a rigorous power flow model, we model network connectivity using a simple network flow model, i.e., as long as a sink node is connected to the source, we assume that all the loads connectd to this node can be supplied without violating any security constraint. For simplicity, we treat the three-phase distribution network as if it were a single-phase system. Our analysis could be extended to a three-phase system using a multi-commodity flow model, as in \cite{yamangil2014designing}. 
\subsection{Damage modeling}
Let $L^{D}$ and $L^{I} = L \setminus L^D$ denote the sets of damaged and intact edges, respectively. Without loss of generality, we assume that there is only one source node in $G$. If an edge is damaged, all downstream nodes lose power due to lack of electrical connectivity. Each damaged edge $l \in L^D$ has a (potentially) unique repair time $p_l$. At the operational stage, we assume perfect knowledge of the set $L^{D}$ and the corresponding repair times, while, at the planning stage, the repair times are modeled as random variables following some probability distribution.
\subsection{Sequencing with soft precedence constraints}
Let $w_{n}$ be a nonnegative quantity that captures the importance of the load at node $n$ and $T_{n}$ the time required to restore power at node $n$ (i.e. the energization time of node $n$). The importance of a node depends on the amount of load connected to it as well as the  types of load served. For example, re-energizing a hospital would receive a higher priority than a similar amount of residential load. We assume that crew travel times are minimal and can be either ignored or factored into the component repair times. Within this framework, our goal is to find a sequence by which the damaged edges should be repaired such that the aggregate harm $\sum_{n \in N}w_{n}T_{n}$ is minimized. 

We construct two simplified directed radial graphs to model the effect that the topology of the distribution network has on scheduling. The first graph, $G^{\prime}$, is called the `damaged component graph'. All nodes in $G$ that are connected by intact edges are merged into supernodes in $G^{\prime}$. The set of edges in $G^{\prime}$ is the set of damaged edges in $G$, $L^D$. The second graph, $P$, called a `soft precedence constraint graph', is formally defined in~\cite{tan2017scheduling}. An edge exists between two nodes in $P$ if they share the same node in $G^{\prime}$. The direction of the edge is determined by the hierarchy of components, or equivalently, the direction of power flow. A node in $P$ represents (1) a damaged edge $l$ in $G$ and (2) a set of nodes that could be energized if edge $l$ and all its predecessors are repaired. See Fig.~\ref{fig:sim_dag} for an illustration on the IEEE $13$-node test feeder (Fig.~\ref{fig:ieee13}). Additional details can be found in~\cite{tan2017scheduling}.
%
\begin{figure}[htbp]
\centering
\includegraphics[width=0.5\columnwidth]{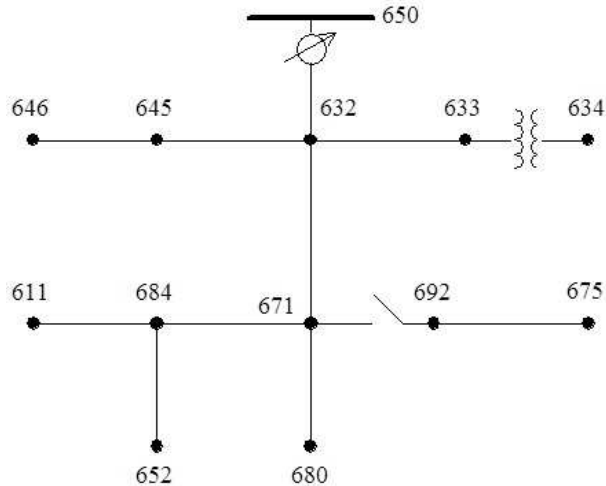}
\caption{IEEE 13 Node Test Feeder}
\label{fig:ieee13}
\end{figure}
\begin{figure}[htbp]
\centering
\subfloat[$G'$ graph]{\includegraphics[width=0.48\columnwidth]{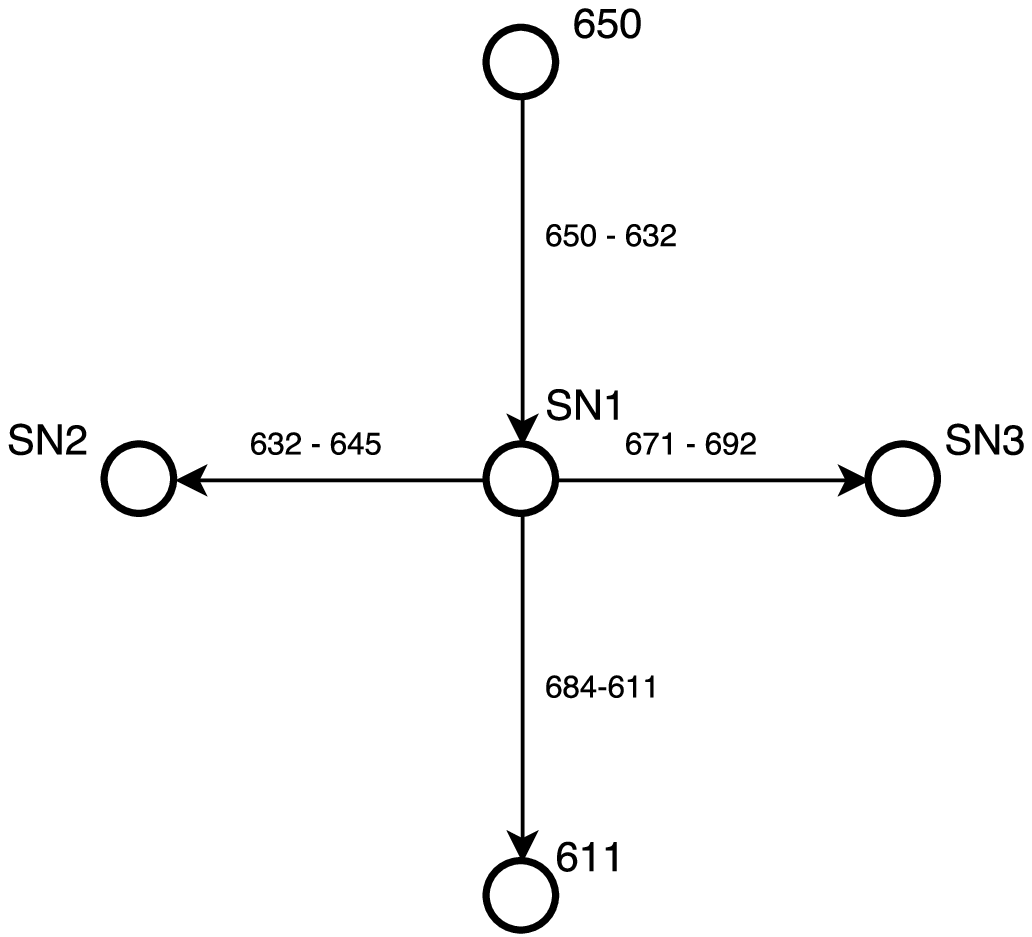}} \quad
\subfloat[$P$ graph]{\includegraphics[width=0.48\columnwidth]{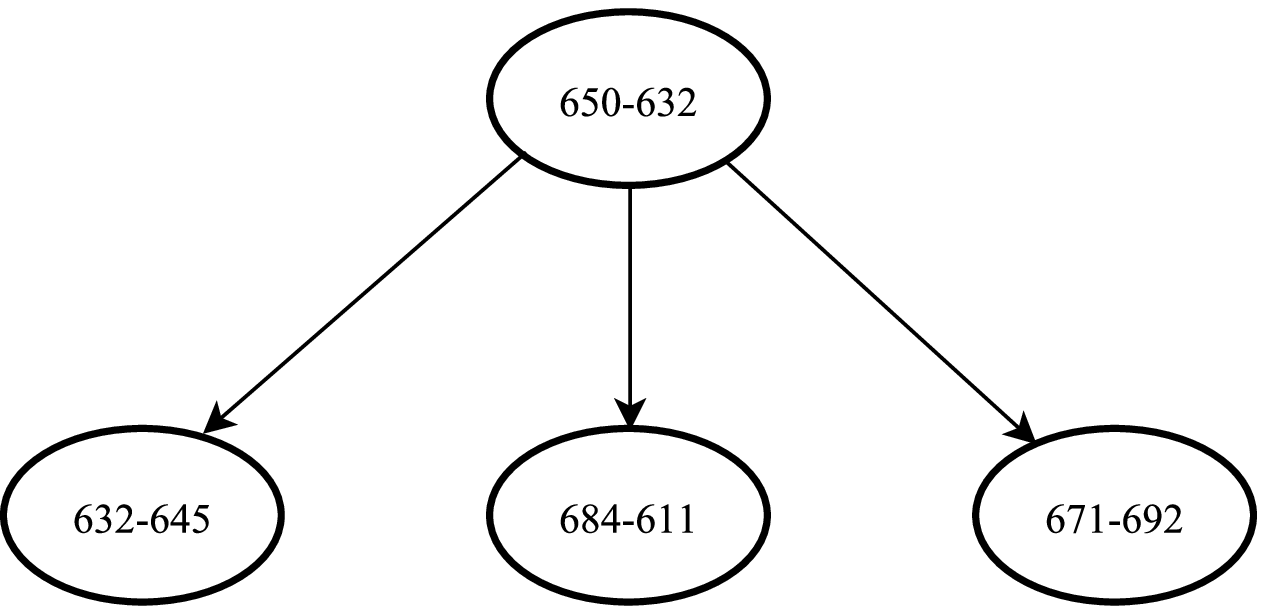}}
\caption{(a) The damaged component graph, $G^{\prime}$, obtained from Fig.~\ref{fig:ieee13}, assuming that the damaged edges are $650-632$, $632-645$, $684-611$ and $671-692$. (b) The corresponding soft precedence graph, $P$.}
\label{fig:sim_dag}
\end{figure}
\subsection{MILP formulation with one repair crew}
\label{sec_sequencing_MILP}
With only one repair crew, the damaged components must be repaired one by one, so there can be $L^D$ decisions to make, one at each time stage. The duration of each stage depends on the repair time of the component. We use two sets of binary decision variables. The first set of decision variables is denoted by $\{x_l^t\}$,  where $x_l^t = 1$ if edge $l$ is repaired at time stage $t$ and is equal to $0$ otherwise. The second set of decision variables is denoted by $\{u_i^t\}$, where $u_i^t = 1$ if node $i$ is energized at the end of time stage $t$ and is equal to $0$ otherwise. Let $T$ denote some restoration time horizon and $h^t$ denote the harm till time stage $t$. Although we cannot know $T$ exactly until the problem is solved, a conservative estimate is adequate. The MILP model for minimizing the aggregate harm is shown below:
\begin{subequations}
\begin{align}
\underset{x, u}{\text{min}} \quad &\sum_{t=1}^{T} h^t  \\
\text{s.t.} \quad 
& u_i^0 = 1, \; \forall i \in S \label{eqn:uinit}\\
& \sum_{t=1}^T u_i^t = 1, \; \forall i \in D \label{eqn:ufinal} \\
& \sum_{l \in L^D} x_l^t = 1, \; \forall t \in [1,T] \label{eqn:edgechosen} \\
& \sum_{\tau = 0}^{t - 1} u_i^{\tau} + u_j^t - 2 x_l^t \geq 0, j \in D, i \in Ne(j), \forall t \in [1,T] \label{eqn:uandx} \\
& d^0 = 0 \label{eqn:dinit} \\
& d^t \geq d^{t - 1} + p_l \times x_l^t, \; \forall t \in [1,T], \; \forall l \in L^D \label{eqn:defd} \\
& h^t \geq w_i r_i^t, \; \forall i \in D, \; \forall t \in [1,T] \label{eqn:defh} \\
& r_i^t \leq M \times u_i^t, \; \forall t \in [1,T], \; \forall i \in D \\
& r_i^t \leq d^t, \; \forall t \in [1,T], \; \forall i \in D \\
& r_i^t \geq 0 \\
& r_i^t \geq d^t - M \left(1 - u_i^t\right), \;\forall t \in [1,T], \; \forall i \in D \label{eqn:linearizeh} 
\end{align}
\end{subequations}
The first set of constraints binds the two sets of decision variables. Constraints (\ref{eqn:uinit}) and (\ref{eqn:ufinal}) specify that all source nodes be energized initially and all sink nodes be energized by time $T$. Constraint (\ref{eqn:edgechosen}) requires that only one damaged edge be chosen for repair at any time stage. Constraint (\ref{eqn:uandx}) requires that whenever some edge $l \in L^D$ is chosen for repair at time stage $t$, i.e., $x_l^t = 1$, it forces $u_j^t$ and $\sum_{\tau = 0}^{t - 1} u_i^{\tau}$ to be 1, i.e., node $j$ (the tail node of edge $l$) is energized at time stage $t$ and at least one of the neighbors of node $j$ in the graph $G$, which we denote by $Ne(j)$, is energized at some previous time stage. Implicitly, this constraint forces that an edge be chosen for repair at time stage $t$ if and only if the tail node of that edge can be immediately energized at the end of time stage $t$, which must be true when only one repair crew is available.

The second set of constraints connects the aggregate harm with the decision variables. The intermediate variable $d^t$ captures the aggregate restoration time just prior to time stage $t$. Constraint~(\ref{eqn:dinit}) initializes the aggregate restoration time to $0$. Constraint~(\ref{eqn:defd}) requires that the difference $d^t - d^{t-1}$, for some $t$, be at least the repair time of the edge being repaired at time $t$. If $l$ is the edge being repaired at time stage $t$, whose tail node is $j$, the $t^{th}$ stage harm can be modeled as follows: $h^t \geq w_j \left(u_j^t d^t\right)$. Constraints (\ref{eqn:defh}) to (\ref{eqn:linearizeh}) linearize the above nonlinear inequality using the big-$M$ method and the intermediate variable $r_i^t$. 
\subsection{Heuristic algorithm}
\label{subsec:seqheur}
We begin with the following lemma, which states that sequencing post-disaster repairs in distribution networks with a single repair crew is equivalent to the scheduling problem $1 \mid outtree \mid \sum_{n \in N} w_{n}C_{n}$, for which an optimal algorithm exists \cite{adolphson1973optimal}. A proof of this lemma can be found in~\cite{tan2017scheduling}.
\begin{lemma}
Single crew repair and restoration scheduling in distribution networks is equivalent to $1 \mid outtree \mid \sum_j w_{j}C_{j}$, where the outtree precedences are given in the soft precedence constraint graph $P$.
\label{lemma:singleeqiv}
\end{lemma}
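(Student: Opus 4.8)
The plan is to exhibit an explicit, objective-preserving bijection between repair sequences for the single-crew problem and feasible schedules for $1 \mid outtree \mid \sum_j w_j C_j$, and then to argue that the \emph{soft} nature of the precedence constraints does not enlarge the optimal value. First I would set up the correspondence: each node of the soft precedence graph $P$ (equivalently, each damaged edge $l \in L^D$) becomes a job $j$ with processing time $p_l$. Because $G$ is radial, every sink node $n$ lies on a unique path to the source and is therefore incident to exactly one ``deepest'' damaged edge on that path, which I denote $l(n)$. Setting $V_j = \{ n \in D : l(n) = j \}$ partitions $D$, and I assign job $j$ the weight $w_j = \sum_{n \in V_j} w_n$, namely the total importance of the sink nodes that become energized precisely when edge $j$ and all of its ancestors in $P$ have been repaired.

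Next I would establish that the precedences encoded by $P$ form an out-tree. Contracting the intact edges of the radial graph $G$ yields the damaged component graph $G'$, which is itself a tree rooted at the source supernode and whose arcs are exactly the damaged edges oriented along the direction of power flow. In $G'$ each supernode has a unique parent arc, so each damaged edge possesses a unique immediate upstream damaged edge; translating this to $P$ shows every job has at most one predecessor, i.e. the precedence relation is an out-tree rooted at the edge(s) incident to the source.

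The crux of the argument, and the step I expect to be the main obstacle, is reconciling the soft precedences of the repair problem with the hard precedences of $1 \mid outtree \mid \sum_j w_j C_j$. In the repair problem any permutation of $L^D$ is admissible, and a node $n$ is energized only once \emph{every} edge on its source-path is repaired, so its true energization time is $T_n = \max_{e \in \mathrm{path}(n)} C_e$ rather than simply the completion time of its deepest edge. I would close this gap with an adjacent-interchange argument: if a schedule repairs a descendant edge immediately before one of its ancestors, swapping the two leaves the completion times of all other edges unchanged, leaves the quantity $\max(C_a, C_b)$ of the swapped pair unchanged (so no node downstream of both is delayed), and can only decrease the completion time of the ancestor (so nodes fed by the ancestor but not the descendant are energized no later). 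Hence the swap never increases $\sum_{n} w_n T_n$, and repeatedly applying it transforms any schedule into a precedence-respecting one without raising the cost.

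Finally I would observe that for a precedence-respecting schedule the deepest edge on each source-path is repaired last among that path's edges, so $T_n = C_{l(n)}$ for every node $n$, and therefore
\begin{equation*}
\sum_{n \in D} w_n T_n = \sum_{j} \Big( \sum_{n \in V_j} w_n \Big) C_j = \sum_{j} w_j C_j .
\end{equation*}
Combined with the interchange argument, this shows that the two problems attain the same optimum and that an optimal out-tree schedule yields an optimal repair sequence, which establishes the claimed equivalence.
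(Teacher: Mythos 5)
The paper does not actually prove Lemma~\ref{lemma:singleeqiv}; it defers to~\cite{tan2017scheduling}, so there is no in-paper argument to compare against. Your reduction is the natural one and matches the setup the paper builds around $G'$ and $P$: jobs are the damaged edges with processing times $p_l$, each job's weight aggregates the $w_n$ of the sink nodes that become energized exactly when that edge and all its ancestors are repaired, the out-tree structure follows from radiality of $G$, and the identity $\sum_{n} w_n T_n = \sum_j w_j C_j$ for precedence-respecting sequences is correct. (A minor point: your $V_j$ partitions only the de-energized portion of $D$; nodes still connected to the source have $T_n = 0$ and simply drop out.)

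The genuine gap is in the interchange step. You justify only swaps of an \emph{adjacent} pair in which a descendant sits immediately before one of its ancestors, and then claim that repeating this converts any sequence into a precedence-respecting one. But an infeasible sequence need not contain any such adjacent pair: with $a \prec d$ and $x$ incomparable to both, the sequence $d, x, a$ violates the precedence yet neither adjacent pair $(d,x)$ nor $(x,a)$ has the required form, so your procedure stalls before reaching feasibility. The fix requires a different move. One option is a block move: relocate the offending descendant $d$ to the slot immediately after its last-scheduled ancestor $a$; every intervening job then finishes $p_d$ earlier, and $d$'s new completion time equals the old $C_a$, which is dominated by $T_n$ for every node whose source-path contains $d$ (such a path necessarily contains $a$ as well), so the cost cannot increase. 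Another option is to bypass interchanges entirely: set $\tilde C_j := \max_{i \preceq j} C_i$ for an arbitrary sequence, note that sorting jobs by $\tilde C_j$ (ties broken consistently with $\preceq$) is precedence-feasible and gives $C_j \le \tilde C_j$ for every $j$ on a single machine, hence cost at most $\sum_n w_n T_n$ of the original sequence. Either route closes the argument; as written, the adjacent-swap version does not.
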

Next, we briefly discuss the algorithm. Details and proofs can be found in~\cite{brucker2007scheduling} and~\cite{tan2017scheduling}. Let $J \subseteq L^D$ denote any subset of damaged edges. Define:
\begin{equation*}
w\left(J\right) := \sum_{j \in J} w_j, \ p\left(J\right) := \sum_{j \in J} p_j, \  q\left(J\right) := \frac{w\left(J\right)}{p\left(J\right)}
\end{equation*}
Algorithm \ref{alg:outree_merge} finds the optimal repair sequence by recursively merging the nodes in the soft precedence graph $P$. The input to this algorithm is the precedence graph $P$. Let $N(P) = \left\{1,2, \dots \vert N(P) \vert\right\}$ denote the set of nodes in $P$ (representing the set of damaged edges, $L^D$), with node $1$ being the designated root. Broadly speaking, at each iteration, a node $j \in N(P)$ (which could also be a group of nodes) is chosen to be merged into its immediate predecessor $i \in N(P)$ if $q(j)$ is the largest. The algorithm terminates when all nodes have been merged into the root. Upon termination, the optimal single crew repair sequence can be recovered from the predecessor vector and the element $A(1)$, which indicates the last job finished. It is shown in Section 4.3.1 of \cite{brucker2007scheduling} that the algorithm can compute the optimal sequence in $O(n \, \text{log} \, n)$-time. 
%
%
\begin{algorithm}
\caption{Optimal algorithm for single crew restoration in distribution networks. The input to this algorithm is the precedence graph $P$. The notation $\text{pred}(n)$ denotes the predecessor of any node $n \in P$.} 
\label{alg:outree_merge} 
\begin{algorithmic}[1]
    \STATE $w(1) \leftarrow -\infty$; \quad $\text{pred}(1) \leftarrow 0$;
    \FOR{$n = 1$ to $\vert N(P) \vert$}
    \STATE $A(n) \leftarrow n; \quad B_n \leftarrow \{n\}; \quad q(n) \leftarrow w(n)/p(n)$;
    \ENDFOR
    \FOR{$n = 2$ to $\vert N(P) \vert$}
    \STATE $\text{pred}(n) \leftarrow \text{parent of $n$ in $P$}$;
    \ENDFOR
    \STATE $\text{nodeSet} \leftarrow \{1,2, \cdots, \vert N(P) \vert\}$;
    \WHILE{$\text{nodeSet} \neq \{1\}$}
    \STATE Find $j \in \text{nodeSet}$ such that $q(j)$ is largest; \quad \text{$\%$ ties can be broken arbitrarily}
    \STATE Find $i$ such that $\text{pred}(j) \in B_{i}$, $i = 1,2,\dots \vert N(P) \vert$;
    \STATE $w(i) \leftarrow w(i) + w(j)$;
    \STATE $p(i) \leftarrow p(i) + p(j)$;
    \STATE $q(i) \leftarrow w(i)/p(i)$;
    \STATE $\text{pred}(j) \leftarrow A(i)$;
    \STATE $A(i) \leftarrow A(j)$;
    \STATE $B_i \leftarrow \{B_i, B_j\}$; \ \text{$\%$ `$,$' denotes concatenation} 
    \STATE $\text{nodeSet} \leftarrow \text{nodeSet} \setminus \{j\}$; 
    \ENDWHILE
\end{algorithmic}
\end{algorithm}
\section{The Hardening Problem: Formulation}
\label{sec:prob}
%
%
\subsection{Damage modeling}
As mentioned above, damages are modeled by repair time vectors associated with network components. Since no \textit{a priori} exact information about the damages  is available at the planning stage, we model the repair times as a random vector $\vec{\mathcal{P}}$. The uncertainties are twofold: the possible scenarios of natural disasters that planners want to take into account and the uncertain damages to components caused by a specific disaster. The distribution of $\vec{\mathcal{P}}$ can be a mixture of a Bernoulli distribution which represents the probability of damage and a (possibly) continuous distribution of repair time, such as the exponential~\cite{patton1979probability} or log-normal distribution~\cite{billinton1985distributional}. Mixed distributions, usually do not admit a closed-form expression of their distribution functions. In our work, we do not assume any knowledge of the distribution function, except for knowledge of the first moment $\mathbb{E}[\vec{\mathcal{P}}]$. 

Some planners tend to use the sample average approximation (SAA) methods by considering a limited set of component damage scenarios, which are either defined by users or drawn from a probabilistic model, as in~\cite{yamangil2015resilient, nagarajan2016optimal}. It is known that SAA methods converge to the optimal solution as the sample size goes to infinity. However, SAA methods require that the selected scenarios be typical and right on target, or the sample averaging needs to be performed over a large number of cases.
\subsection{Hardening options and costs}
In practice, multiple hardening actions are usually available for each network component. For example, hardening an edge can involve some combination of vegetation management, pole reinforcement, undergrounding, enhanced pole guying, ~\cite{ma2016resilience}. 
Typically, the goal of hardening a component is to lower the probability of its failure in the event of a disaster. However, since we are interested in maximizing the resilience of the system, or equivalently, minimizing the aggregate harm, simply lowering the probability of failure of a component is not sufficient. Since the aggregate harm is a function of the restoration times of the nodes, which in turn depend on the repair times of the damaged components (and the repair schedule), hardening a component can only be beneficial if it leads to a corresponding reduction in the repair time of that component.

In this paper, we assume that there is a finite set of hardening strategies for each edge $l$, which we denote by $K_l$. Each such strategy can be some combination of several disjoint hardening actions. We require that the hardening process select one strategy from the set $K_l$. Let $\vec{p} = \{p_l\}$, where $p_l$ is the `expected repair time' of component $l$ before hardening, $\Delta \vec{p} = \{\Delta p_{lk}\}$, where $\Delta p_{lk}$ is the `expected reduction in the repair time' of component $l$ due to hardening strategy $k \in K_l$, and $c_{lk}$ be the cost of implementing hardening strategy $k$ on edge $l$. We make the following assumption on the relationship between $c_{lk}$ and $\Delta p_{lk}$: 
\begin{assumption}
For any two hardening strategies $(k_1, k_2) \in K_l$, if $\Delta p_{l k_1} < \Delta p_{l k_2}$, then $c_{l k_1} < c_{l k_2}$ and vice versa.
\label{asmp:mono}
\end{assumption}
Generally, the more a component is hardened, the greater is the cost of hardening, but so is the reduction in repair times. The reasoning behind Assumption~\ref{asmp:mono} is similar to that of Proposition~1 in~\cite{sinha1979multiple}. If there exists two hardening strategies $(k_1, k_2) \in K_l$ which violate the assumption, i.e., $\Delta p_{l k_1} > \Delta p_{l k_2}$ is true while $c_{l k_1} < c_{l k_2}$, strategy $k_2$ cannot be part of the optimal hardening solution. 
%
%
\subsection{Ideal stochastic programming model}
\begin{definition}
Given a repair time vector $\vec{p}$, the min-harm (or equivalently, max-resilience) function, denoted by $f(\cdot)$, is the mapping $\vec{p} \overset{f(\cdot)}{\longmapsto} H_{opt}$, where $H_{opt}$ is the harm when repairs are sequenced optimally with a single repair crew.
\end{definition}
Let $C$ denote the capital budget available for hardening, $\mathcal{P}$ denotes the repair time after hardening, modeled as a random vector to account for different disaster scenarios, and $y_{lk}$ be a binary variable which is equal to $1$ if hardening strategy $k$ is chosen for edge $l$ and $0$ otherwise. A stochastic optimization model for minimizing the aggregate harm is:
\begin{subequations}
\begin{align}
\underset{\{\Delta p_l\}, \{y_{lk}\}}{\text{min}} \quad &\mathbb{E}[f(\vec{\mathcal{P}})] \label{eqn:hardenidealobj} \\
\text{s.t.} \quad
& \vec{p} - \Delta \vec{p} = \mathbb{E}[\vec{\mathcal{P}}] \label{eqn:hardeningpdistribution} \\
& \sum_{k \in K_l} y_{l k} \leq 1, \;\forall l \in L^D \label{eqn:disjointhardeningactions} \\
& \sum_{l \in L^D} \sum_{k \in K_l} c_{l k} y_{l k} \leq C \label{eqn:hardenbudget} \\
& \Delta p_l = \sum_{k \in K_l} \Delta p_{l k} y_{l k}, \;\forall l \in L^D \label{eqn:dpdef} \\
& y_{lk} \in \{0,1\}, \; \forall l \in L^D, \; \forall k \in K_l \label{eqn:hardenYdec}
\end{align}
\end{subequations}
where the expectations are over all disaster scenarios (the assumption being, different disaster scenarios cause different types/scales of damage and therefore lead to different repair times). While the notation $L^D$ denotes the set of actual damaged edges in the context of the post-disaster scheduling problem (operational phase), we interpret it as the set of all edges which could potentially be damaged in the event of a disaster, the worst case operational scenario, in the context of the hardening problem. Eqn.~(\ref{eqn:hardeningpdistribution}) is the mean-enforcing constraint (which requires that we have knowledge of the first moment of $\vec{\mathcal{P}}$), eqns.~(\ref{eqn:disjointhardeningactions}) and (\ref{eqn:hardenYdec}) force at most one hardening strategy to be chosen per edge from the set $K_l$, eqn.~(\ref{eqn:hardenbudget}) enforces the budget constraint, and eqn.~(\ref{eqn:dpdef}) models the (possible) reduction in repair time of each edge $l$ due to hardening. Observe that the set of constraints (\ref{eqn:disjointhardeningactions}), (\ref{eqn:hardenbudget}) and (\ref{eqn:hardenYdec}) mimics a 0-1 knapsack constraint since we are essentially choosing a subset of hardening strategies from the set of all hardening strategies over all edges, subject to a budget constraint.

Unfortunately, the above stochastic program is difficult to solve, even with perfect knowledge of the distribution of $\vec{\mathcal{P}}$. It is indeed almost impossible to know beforehand the explicit form of $f(\cdot)$ which would result in the optimal harm. Another complicating factor is that the evaluation of the objective function requires knowledge of the distribution function of $\vec{\mathcal{P}}$, while at the same time, this distribution function depends upon the decision variable (eqn.~(\ref{eqn:hardeningpdistribution})). This effectively rules out the applicability of SAA methods. While metaheuristics such as simulated annealing could be used to solve the above problem to (near) optimality, doing so might require an inordinate amount of computation time. We therefore propose a deterministic robust reformulation which is computationally tractable.
\section{The Hardening Problem: Deterministic Robust Reformulation using Jensen's Inequality}
\label{sec:reform}
We begin this section by showing that the min-harm function $f(\cdot)$ is concave. 
In order to develop an intuition for the concavity of $f(\vec{p})$, let us consider an arbitrary network with two possible repair time vectors, $\vec{p_1}$ and $\vec{p_2}$, such that the $l^{th}$ element of $\vec{p_2}$ is smaller than the corresponding element of $\vec{p_1}$ by $\Delta p_l$ while all other elements are equal. That is, $p_2[i] = p_1[i] - \Delta p_l$ if $i = l$ and $p_2[i] = p_1[i]$ if $i \neq l$. Suppose the optimal single crew repair schedule corresponding to $\vec{p_1}$ is $S_1$ and edge $l$ is the $k_1^{th}$ job in $S_1$. Let $R_1$ denote the set of $l$ and all jobs scheduled  after stage $k_1$ in $S_1$. Similarly, let $S_2$ denote the optimal single crew repair schedule corresponding to $\vec{p_2}$ such that edge $l$ is the $k_2^{th}$ job in $S_2$, and $R_2$ denote the set of $l$ and all jobs scheduled  after stage $k_2$ in $S_2$. While $S_1$ is indeed a feasible schedule for $\vec{p_2}$, it may not necessarily be the optimal schedule. In fact, whether the optimal schedule changes under $\vec{p_2}$ depends on $\Delta p_l$, as explained below.

If $\Delta p_l$ is small enough that the optimal schedule under $\vec{p_2}$ is still $S_1$ (i.e., $S_1 = S_2$), it is easy to see that $R_1 = R_2$ and $f(\vec{p_1}) - f(\vec{p_2}) = \Delta p_l \sum_{j \in R_1} w_j > 0$, by positivity of the $w_j$'s. Therefore, any reduction in $p_l$ which does not lead to a change in the optimal schedule will result in a locally linear property for $f(\vec{p})$. 

If $\Delta p_l$ is large enough that $S_1 \neq S_2$, it follows from Algorithm \ref{alg:outree_merge} that edge $l$ cannot be scheduled any later in $S_2$ than in $S_1$; i.e, it must be true that $k_2 \leq k_1$ and $R_2 \supseteq R_1$. Then by the previous paragraph, the slope of the linear section of $\hat{C}_l$ from $p_l - \Delta p_l$ to $p_l$ should be $\sum_{j \in R_2} w_j$. Therefore, in this case, $\sum_{j \in R_2} w_j \geq \sum_{j \in R_1} w_j$, showing a concave property. 
\begin{theorem}
The min-harm function $f(\vec{p})$ is concave.
\label{thm:minharmconcave}
\end{theorem}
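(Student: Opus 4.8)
The plan is to exhibit $f(\vec{p})$ as the pointwise minimum of a finite family of functions, each of which is affine (indeed linear) in $\vec{p}$, and then invoke the elementary fact that a pointwise minimum of affine functions is concave.

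First I would freeze the combinatorial structure of the problem. By Lemma~\ref{lemma:singleeqiv}, evaluating $f(\vec{p})$ is exactly solving $1 \mid outtree \mid \sum_j w_j C_j$ on the precedence graph $P$. Let $\Sigma$ denote the (finite) set of all single-crew repair sequences that respect the outtree precedences encoded in $P$. The key structural observation is that $\Sigma$ is determined solely by the topology of $P$ and is \emph{independent} of the repair-time vector $\vec{p}$: altering $\vec{p}$ changes the value attained by a sequence but never whether that sequence is feasible.

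Next I would verify that each fixed sequence induces an affine objective. Fix $\sigma \in \Sigma$. Because a single crew processes the damaged edges one at a time in the order prescribed by $\sigma$, the completion (energization) time of job $j$ is $C_j(\sigma, \vec{p}) = \sum_{i \,\preceq_\sigma\, j} p_i$, i.e. the sum of the repair times of $j$ together with all jobs that precede it in $\sigma$. Since the index set $\{\, i : i \preceq_\sigma j \,\}$ is fixed once $\sigma$ is fixed, $C_j(\sigma, \cdot)$ is linear in $\vec{p}$, and hence so is $g_\sigma(\vec{p}) := \sum_j w_j\, C_j(\sigma, \vec{p})$. Because an optimal single-crew schedule is the one minimizing the harm, we obtain the envelope representation
\begin{equation*}
f(\vec{p}) \;=\; \min_{\sigma \in \Sigma} g_\sigma(\vec{p}),
\end{equation*}
a pointwise minimum of finitely many linear functions taken over the common, $\vec{p}$-independent index set $\Sigma$. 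A pointwise minimum of affine functions is concave, so $f$ is concave and the theorem follows.

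The step requiring the most care --- and the reason the informal slope analysis preceding the statement does not by itself close the argument --- is the clean separation between the feasibility of a sequence and its value. The preamble computes only that, along a single coordinate, the one-sided derivative $\partial f / \partial p_l = \sum_{j \in R} w_j$ is nonincreasing in $p_l$ (since the trailing set $R$ can only enlarge as $p_l$ shrinks), which is coordinate-wise concavity; on its own this would not force joint concavity. What upgrades it to full joint concavity is precisely that one and the same finite family $\{g_\sigma\}_{\sigma \in \Sigma}$ realizes $f$ at every point, so the representation $f = \min_{\sigma} g_\sigma$ is valid globally. I would therefore spend the bulk of the write-up justifying the $\vec{p}$-independence of $\Sigma$ and the linearity of each $C_j(\sigma, \cdot)$, after which concavity is immediate.
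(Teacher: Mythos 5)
Your proof is correct and is essentially the paper's own argument in cleaner packaging: the paper's key step --- writing $f(\vec{p_0}) = \lambda f_{\vec{p_0}}(\vec{p_1}) + (1-\lambda) f_{\vec{p_0}}(\vec{p_2}) \geq \lambda f(\vec{p_1}) + (1-\lambda) f(\vec{p_2})$, where $f_{\vec{p_0}}$ is the (affine) harm under the schedule optimal for $\vec{p_0}$ --- is precisely the two-point verification that the pointwise minimum of the affine family $\{g_\sigma\}_{\sigma\in\Sigma}$ over the $\vec{p}$-independent index set $\Sigma$ is concave. Your explicit emphasis on the $\vec{p}$-independence of $\Sigma$ and the linearity of each $C_j(\sigma,\cdot)$ is a welcome tightening, but it is the same route, not a different one.
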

\begin{proof}
First, we will show that $f(\vec{p})$ has a piecewise affine structure by extending the analysis above. Using the same arguments, when $\Delta \vec{p}$ is small enough such that the optimal schedule at $\vec{p} - \Delta \vec{p}$ is the same as that at $\vec{p}$, we have: \begin{align}
f(\vec{p}) - f(\vec{p} - \Delta \vec{p}) = \sum_{l \in L^D} \Delta p_l \sum_{j \in R_l} w_j ,
\end{align}
where $R_l$ denotes the set of all jobs scheduled no later than $l$ in the optimal sequence. This shows that the change in the objective value is a linear function of the $\Delta p_l$'s within a small neighborhood of $\vec{p}$, and therefore $f(\vec{p})$ is piecewise affine.

Let $f_{\vec{p_i}}(\vec{p_j})$ denote the harm if the optimal schedule corresponding to $\vec{p_i}$ is used when the actual repair time vector is $\vec{p_j}$. For compactness, we define $f(\vec{p_j}) := f_{\vec{p_j}}(\vec{p_j})$. Since $f(\cdot)$ is affine, for any $\lambda \in (0, 1)$ and $\left(\vec{p_1},\vec{p_2}\right) \in \mathcal{R}^+$ such that $\vec{p}_2 > \vec{p}_1$ and $\vec{p_0} = \lambda \vec{p_1} + (1 - \lambda)\vec{p_2}$, we have:
\begin{align}
f(\vec{p_0}) &= f_{\vec{p_0}}\left(\lambda \vec{p_1} + (1 - \lambda)\vec{p_2}\right) \\ 
&= \lambda f_{\vec{p_0}}(\vec{p_1}) + (1 - \lambda)f_{\vec{p_0}}(\vec{p_2})
\label{eqn:minharmfctlocallinear}
\end{align}
The optimal sequence for $\vec{p_0}$ is not necessarily identical to the optimal sequences for $\vec{p_1}$ and $\vec{p_2}$, and it must be true that $f_{\vec{p_0}}\left(\vec{p_1}\right) \geq f(\vec{p_1})$ and $f_{\vec{p_0}}\left(\vec{p_2}\right) \geq f(\vec{p_2})$. Substituting into eqn~(\ref{eqn:minharmfctlocallinear}),
\begin{align}
f(\vec{p_0}) &= \lambda f_{\vec{p_0}} (\vec{p_1}) + (1 - \lambda) f_{\vec{p_0}}(\vec{p_2}) \\
&\geq \lambda f(\vec{p_1}) + (1 - \lambda) f(\vec{p_2}),
\label{eqn:2}
\end{align}
which proves that $f(\cdot)$ is a concave function.
\end{proof}
Since $f(\cdot)$ is concave, Jensen's inequality~\cite{jensen1906fonctions} holds and the objective function~(\ref{eqn:hardenidealobj}) can be naturally upper bounded as follows:
\begin{equation}
E[f(\vec{\mathcal{P}})] \leq f(E[\vec{\mathcal{P}}])
\label{eqn:jensen}
\end{equation}
Fig.~\ref{fig:comparisonfeef} provides an illustration of the above inequality when $\vec{\mathcal{P}}$ follows an univariate geometric distribution. 

The preceding discussion motivates the following deterministic robust reformulation (note that constraint~(\ref{eqn:hardeningpdistribution}) has been wrapped into the objective function): 
\begin{align}
\underset{\{\Delta p_l\}, \{y_{lk}\}}{\text{min}} \quad &f(\vec{p} - \Delta \vec{p}) \label{eqn:hardenrefobj} \\
\text{s.t.} \quad
& (\ref{eqn:disjointhardeningactions}) \sim (\ref{eqn:hardenYdec}) \nonumber
\end{align}
As will be apparent from the next section, the above model is a key development which allows for an integrated treatment of the restoration process and the hardening problem.

We conclude this section with a note on the worst case impact on the objective function caused by the upper bounding by Jensen's inequality. Assume that the support of $\vec{\mathcal{P}}$ is bounded, i.e., $\vec{\mathcal{P}} \in [0, \vec{p}_{max}]$. Then, it follows from Theorem 1 in~\cite{simic2008global} that:
\begin{align}
f\left(\mathbb{E}[\vec{\mathcal{P}}]\right) - \mathbb{E}\left[f(\vec{\mathcal{P}})\right] \leq f\left(\vec{p}_{max}\right) - 2 f\left(\frac{\vec{p}_{max}}{2}\right)
\end{align}
\begin{figure}[htbp]
\centering
\includegraphics[trim = 0 10 0 50, clip, width = 0.5\columnwidth]{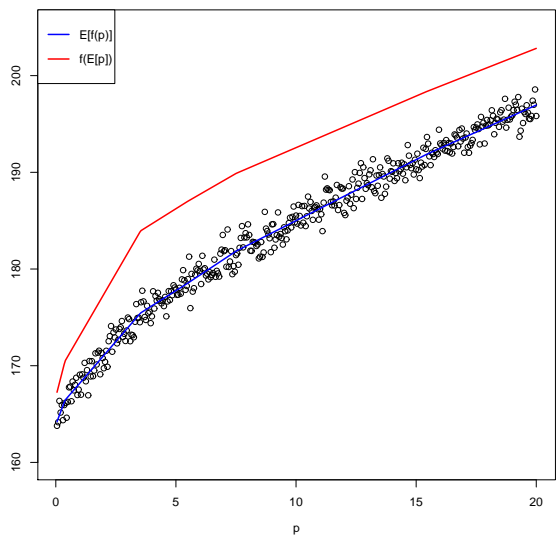}
\caption{Comparison of one-dimensional $f(\mathbb{E}[\mathcal{P}])$ and $\mathbb{E}[f(\mathcal{P})]$ when $\mathcal{P}$ follows a univariate geometric distribution.}
\label{fig:comparisonfeef}
\end{figure}
\section{Restoration Process Aware Hardening Problem}
\label{sec:solution}
Usually, the restoration problem and the hardening problem are treated separately because the former is an operational problem while the latter is a planning problem. However, we argue that the two problems should not be treated in isolation because hardening can affect the repair times, which in turn, can influence the restoration times through the sequencing process and thereby the aggregate harm or resilience. The model that we formulate is similar to single machine scheduling with controllable processing times, which dates back to the 1980s~\cite{nowicki1990survey}. See Section~2 in~\cite{shioura2016application} for a review of recent advances. Our problem is more complicated in the sense that the effect of hardening decisions (or costs of compression amount in that setting) are not just linear, instead they are embedded in the sequencing problem. 

In this section, we discuss two solution approaches for the so-called `restoration process aware hardening problem' (RPAHP), first an MILP formulation, followed by a heuristic algorithm framework inspired by a continuous convex relaxation. Although we consider only one repair crew, we emphasize that this does not preclude the deployment of multiple crews for post-disaster restoration. Since it is impossible to know the availability of the number of repair crews in the event of a disaster at the network planning stage, we choose to build our integrated model assuming one repair crew. The key point being, a network which has been designed/hardened with an eye on the restoration process (albeit, with one repair crew) will be much quicker to restore post-disaster when additional repair crews might be available, as opposed to a network which has been designed/hardened with no consideration given to the restoration process. Furthermore, as shown in~\cite{tan2017scheduling}, edges which are scheduled earlier in the single crew repair sequence should be repaired with a higher priority in the multi-crew schedule. Therefore, consideration of a single repair crew should provide an indicative result, irrespective of the number of repair crews.
\subsection{MILP Formulation}
\label{subsec:hardenMILP}
In Section~\ref{sec_sequencing_MILP}, we developed an MILP model for optimizing the repair schedule with one repair crew, while in Section~\ref{sec:reform}, we developed a deterministic MILP reformulation of the hardening problem, both with the same objective, minimization of the aggregate harm. These two models can be easily incorporated into an integrated MILP formulation, as shown below $(\vec{x} := \{x_l^t, t = [1,T], \, l \in L^D\}$, $\vec{u} := \{u_i^t, t = [1,T], \, i \in N\}$, $\Delta \vec{p} := \{\Delta p_l, l \in L^D\}$, $\vec{y} := \{y_{lk}, l \in L^D, \, k \in K_l\})$: 
\begin{align}
\underset{\vec{x}, \vec{u}, \Delta \vec{p}, \vec{y}}{\text{min}} \quad &\sum_{t=1}^T h^t  \\
\text{s.t} \quad 
& (\ref{eqn:uinit}) \sim (\ref{eqn:dinit}) \nonumber \\
& d^t \geq d^{t - 1} + (p_l - \Delta p_l) \times x_l^t, \, \forall t \in [1,T], \, \forall l \in L^D\label{eqn:hardendefd} \\
& (\ref{eqn:defh}) \sim (\ref{eqn:linearizeh}) \nonumber \\
& (\ref{eqn:disjointhardeningactions}) \sim (\ref{eqn:hardenYdec}) \nonumber
\end{align}
Observe that the impact of hardening, $\Delta p_l$,  is incorporated into constraint~(\ref{eqn:hardendefd}). The product of $\Delta p_l$ and $x_l^t$ on the r.h.s of eqn.~(\ref{eqn:hardendefd}) can be easily linearized using the big-$M$ method, details of which are omitted.
\subsection{A continuous convex relaxation}
\label{subsec:relaxation}
As stated previously, $f(\vec{p})$ and $f(\vec{p} - \vec{\Delta p})$ are both concave piecewise affine functions in $\vec{p}$. In general, concave minimization problems are $\mathcal{NP}$-hard~\cite{garey1976some}. In our case, there are at most $n!$ affine pieces, corresponding to $n!$ number of affine possible sequences, where $n= |L^D|$ is the number of damaged edges. The region corresponding to each affine piece is a cone, and all cones share a common  vertex.

The RPAHP involves two types of decision variables, the sequencing variables (the $x$'s and $u$'s) and the hardening variables (the $\Delta p$'s). Given the hardening variables, it is straightforward to see that the joint optimization problem reduces to the single crew sequencing problem, which can be solved optimally in polynomial time as stated previously in Section~\ref{subsec:seqheur}. 

Now let us consider the case where the sequencing variables are fixed. Let:
\begin{align}
\Omega_l := \sum_{j \in R_l} w_j ,
\label{eqn:defOmega}
\end{align}
where $R_l$ is the set of some edges $l \in L^D$ and all its successors in the given sequence. As discussed in Section~\ref{sec:reform}, the quantity $\Omega_l$ represents the reduction in aggregate harm per unit decrease in $p_l$. The objective function for the hardening problem can now be recast as $f(\vec{p}) = \sum_{l \in L^D} \Omega_l \, p_l$, which implies:
\begin{align}
f(\vec{p} - \Delta \vec{p}) = \sum_{l \in L^D} \Omega_l (p_l - \Delta p_l) = \sum_{l \in L^D} \Omega_l p_l - \sum_{l \in L^D} \Omega_l \Delta p_l
\label{eqn_knapheur_preobj}
\end{align}
Since the first term on the extreme r.h.s of eqn.~(\ref{eqn_knapheur_preobj}) is a constant,  instead of minimizing $f(\vec{p} - \Delta \vec{p})$, an equivalent formulation is:
\begin{subequations}
\label{eqn:mckp}
\begin{align}
\underset{\vec{y}}{\text{max}} \quad &\sum_{l \in L^D}  \sum_{k \in K_l} \Omega_l \Delta p_{lk} y_{lk} \\
\text{s.t.} \quad
& \sum_{k \in K_l} y_{lk} \leq 1\,\;, \forall l \in L^D \\
& \sum_{l \in L^D}  \sum_{k \in K_l} c_{lk} y_{lk} \leq C \\
& y_{lk} \in \{ 0, 1 \},\,\; \forall l \in L^D,\,\; \forall k \in K_l
\end{align}
\end{subequations}
This model is similar to that of the multiple choice knapsack problem~\cite{sinha1979multiple}, where $\Omega_l \Delta p_{lk}$'s are the value coefficients and $c_{lk}$'s are the cost coefficients. Since the multiple choice knapsack is known to be $\mathcal{NP}$-hard, we propose an algorithm based on convex envelopes and LP relaxation, similar to~\cite{kameshwaran2009nonconvex}.
\begin{definition}[Convex envelope~\cite{horst2013global}]
Let $M \subset \mathcal{R}^n$ be convex and compact and let $g : M \rightarrow \mathcal{R}$ be lower continuous on $M$. A function $\hat{g} : M \rightarrow \mathcal{R}$ is called the convex envelope of $f$ on $M$ if it satisfies:
\begin{itemize}
\item $\hat{g}(x)$ is convex on $M$,
\item $\hat{g}(x) \leq g(x)$ for all $x \in M$,
\item there is no function $h : M \rightarrow \mathcal{R}$ satisfying (1), (2) and $g(x_0) < h(x_0)$ for some point $x_0 \in M$.
\end{itemize}
\end{definition}
Intuitively, the convex envelope is the best underestimating convex function of the original function. Details of a polynomial time algorithm for computing the convex envelope of a piecewise linear function can be found in~\cite{kameshwaran2009nonconvex}. 

Given a discrete function of $c_{lk}$ vs. $\Delta p_{lk}$ for some edge $l$ and a set of all hardening actions $k \in K_l$, we first connect the neighboring points, starting from the origin,  to construct a continuous piecewise linear cost function $C_l(\Delta p_l)$, where $\Delta p_l$ is the relaxed continuous decision variable. It follows from  Assumption~\ref{asmp:mono} that $C_l$ is a strictly increasing function. Let $\hat{C}_l$ denote the convex envelope of $C_l$ and $\hat{K}_l = \{1, 2, \cdots, \lvert \hat{K}_l \rvert\}$ denote the set of breakpoints/knots on the convex envelope (excluding the origin) corresponding to the hardening strategies in consideration, indexed in ascending order of $\Delta p_{lk}$. The linear relaxation of~(\ref{eqn:mckp}), based on the convex envelope approximations, can then be formulated as:
\begin{subequations}
\label{eqn:LPrelax}
\begin{align}
\underset{\Delta \vec{p}}{\text{max}} \quad &\sum_{l \in L^D}  \Omega_l \Delta p_{l} \label{eqn:relaxhardenrefobj} \\
\text{s.t.} \quad
& Q_{l} \geq \underset{k \in \hat{K}_l}{\mbox{max}}  \left[ \mu_{lk} \left(\Delta p_{l} - \alpha_{lk}\right) + b_{lk} \right], \, \forall l \in L^D \label{eqn:hardenconst1_AltLP} \\
& \sum_{l \in L^D} Q_{l} \leq C \label{eqn:hardenconst2_AltLP}  \\
& 0 \leq \Delta p_l \leq \Delta p_{l,  \lvert \hat{K}_l \rvert}, \, \forall l \in L^D
\end{align}
\end{subequations}
where $\mu_{lk}$ and $b_{lk}$ are the slope and intercept (see below) of the $k^{th}$ piece of $\hat{C}_l$, $\alpha_{lk}$ is the lower breakpoint of the $k^{th}$ piece of $\hat{C}_l$, and $Q_l$ is an intermediate decision variable which accounts for the budget spent on edge $l$. This formulation is similar to the conventional continuous knapsack problem, and it turns out that the optimal values of $\Delta p_l$ are always from the set $\{0, \mbox{some } \beta_{lk}, (\alpha_{lk}, \beta_{lk})\}$, where $\beta_{lk}$ is the upper breakpoint of the $k^{th}$ piece of $\hat{C}_l$. Furthermore, at most one $\Delta p_l$ can have an intermediate value in the range $(\alpha_{lk}, \beta_{lk})$ in the optimal solution. For some $l$ and $k > 1$, the intercept parameter, $b_{lk}$, is: $b_{lk} = \hat{C}_l\left(\alpha_{lk}\right) = \hat{C}_l\left(\beta_{l(k-1)}\right)$. For $k = 1$, $\alpha_{lk} = \hat{C}_l\left(\alpha_{lk}\right) = b_{lk} = 0$.
%

The preceding LP relaxation~(\ref{eqn:LPrelax}) can also be solved optimally using a greedy algorithm by first sorting the ratios $\left\{\frac{\Omega_l}{\mu_{lk}}\right\}$ in a descending order, and then choosing the components (and the degree of hardening) based on that sorted list iteratively, until the budget is exhausted. Ties, if any, during the selection process, are broken arbitrarily. We use a $\Delta p$ variable for each edge $l$ and each segment $k$ of $\hat{C}_l$. All these $\Delta p_{lk}$ variables are initialized to $0$. Once an $(l,k)$ selection is made from the sorted list at any iteration $T$, say $l = l_T$ and $k = k_T$, we set $\Delta p_{l_T k_T}$ equal to the maximum value possible within the range $[\alpha_{l_T k_T}, \beta_{l_T k_T}]$ such that the `cumulative budget' at the end of iteration  $T$ does not exceed $C$. Typically, this maximum value will be at the upper breakpoint $\beta_{l_T k_T}$, unless, doing so results in a budget violation. In that case, a proper value within the range $(\alpha_{l_T k_T}, \beta_{l_T k_T})$ is chosen such that the budget is met exactly. During implementation, we first assign $\Delta p_{l_T k_T} \leftarrow \beta_{l_T k_T}$ and then check the budget violation criterion (explained below). If the criterion is violated, $\Delta p_{l_T k_T}$ is reassigned a proper value such that the budget is met exactly. It is obvious that this reassignment needs to be done at most once during the operation of the algorithm. 
At the end of every iteration, we evaluate the expression: 
\begin{align} 
  \Lambda &= \sum_{l \in L^D} \, \hat{C}_l \left(\underset{k \in \hat{K}_l}{\mbox{max}} \left\{ \Delta p_{lk} \right\} \right) , 
 \label{greedyAlgoTerminationCodn}
\end{align}
which represents the cumulative budget consumed till the current iteration. The algorithm terminates when $\Lambda = C$. Upon termination, the optimal $\Delta p_l$ values can be obtained from the $\Delta p_{lk}$ values as follows:
\begin{align}  
  \Delta p_l &= \underset{k \in \hat{K}_l}{\mbox{max}} \left\{ \Delta p_{lk} \right\}.
 \label{greedyAlgoVariableRecovery}
\end{align}
%
We now provide an example which helps illustrate the operation of the algorithm. 

Consider a scenario where two edges are to be repaired, $l=1,2$, and the hardening cost functions for the two edges are as shown in Fig.~\ref{fig:illusgreedy}. Suppose $C=10$ and $\Omega_1 = \Omega_2 = 1$. 

\begin{figure}[htbp]
\centering
\includegraphics[width = 1.0\columnwidth]{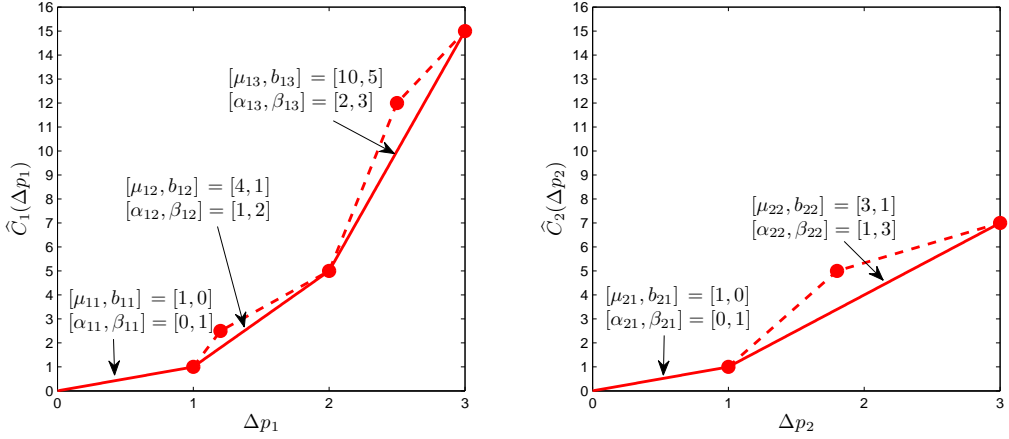}
\caption{Hardening cost functions for illustrating the greedy algorithm used to solve the continupus knapsack-like problem. The solid circles represent the actual discrete hardening strategies and costs, the dashed lines represent the piecewise linear constructions, while the solid lines represent the convex envelope approximations.}
\label{fig:illusgreedy}
\end{figure}

The selections made by the greedy algorithm at each step are as follows: 
\begin{itemize}
  \item \textit{Step 1}: Breaking ties arbitrarily, choose $(l=2, k=1)$, set $\Delta p_{21} = 1$, cumulative hardening cost $= C_1(1) = 1$.  
  \item \textit{Step 2}:  Choose $(l=1,k=1)$, set $\Delta p_{11} = 1$, cumulative hardening cost $= C_1(1) + C_2(1) = 1+1=2$. 
  \item \textit{Step 3}:  Choose $(l=2, k=2)$, set $\Delta p_{22} = 3$, cumulative hardening cost $ = C_1(1) + C_2(\mbox{max}[1,3]) = 1 + 7 = 8$.  
  \item \textit{Step 4}:  Choose $(l=1, k=2)$, set $\Delta p_{12} = 1.5$, cumulative hardening cost $= C_1(\mbox{max}[1,1.5]) + C_2(\mbox{max}[1,3]) = 3 + 7 = 10$. Note that, unlike the previous $3$ steps, we can only afford $1.5$ units of  hardening corresponding to $(l=1, k=2)$ so that the budget is not violated.
\end{itemize}
The LP solutions are therefore the points $(1.5,3)$ and $(3,7)$ for edges $1$ and $2$ respectively.
\subsection{An iterative heuristic algorithm}
\label{subsec:hardenGreedy}
We now discuss an iterative heuristic algorithm for solving the RPAHP. First, we note that the solutions obtained from the greedy algorithm used to solve the convex relaxation formulation~(\ref{eqn:LPrelax}) may need to be \emph{rounded down} to the nearest lower breakpoints on the convex envelopes so that the hardening strategy is feasible for each edge. In the context of the above example, we would therefore select the point $(1, 1)$ for edge $1$ (left panel of Fig.~\ref{fig:illusgreedy}). No rounding is necessary for edge $2$ since the point selected by the greedy algorithm, $(3,7)$, does correspond to an actual hardening strategy. By rounding down, whenever necessary, we ensure that the budget constraint will not be violated. 

However, after completion of the rounding process, we may find that a portion of the budget has been left unspent. We therefore incorporate a \emph{backfill} heuristic which iteratively solves LP relaxations of the form~(\ref{eqn:LPrelax}) with the unspent budget from the previous iteration and the remaining available hardening options, along with updated convex envelopes and the optimal repair sequence, followed by a rounding down to a feasible hardening strategy. In the context of the example provided in the previous sub-section, rounding down the LP solution for edge $1$ to the point $(1,1)$ creates an unspent budget of $2$ units, which becomes the new budget for the second iteration. During the second iteration, the points $(0,0)$ (no hardening is a feasible option in iteration $1$), $(2,5)$, $(2.5,12)$ and $(3,15)$ in the left panel of Fig.~\ref{fig:illusgreedy} are no longer in consideration and the convex envelope is recomputed over the set of points $(1,1)$ and $(1.2,2.5)$, with the former being the new origin. Since the optimal repair schedule depends on the repair times, we update the schedule after every iteration $t$ with the new repair time vector, $\vec{p}(t+1) \leftarrow \vec{p}(t) - \Delta \vec{p}(t)$. The backfill process terminates whenever the budget has been spent exactly, or, when no further enhancement is possible on any edge without exceeding the budget. 

Summarizing what we have so far, we now describe a general framework of a multi-run heuristic algorithm for solving the RPAHP, as shown in Fig.~\ref{fig:flowchart}. Broadly speaking, the approach involves three major stages. In the first stage, we compute the single crew optimal sequence, given $\vec{p}$, the expected repair time vector before hardening. In the second stage, we use the optimal repair sequence obtained from the first stage and solve the LP relaxation~(\ref{eqn:LPrelax}) using the convex envelopes of the hardening cost functions, followed by rounding, which yields a set of feasible hardening decisions. In the third stage, we implement a backfill procedure by re-solving the LP relaxation~(\ref{eqn:LPrelax}) with updated information, as described in the previous paragraph. We provide three options in Fig.~\ref{fig:flowchart} which differ in how often the repair sequence is updated based on some hardening decisions. Option $3$, which is the most aggressive, updates the repair sequence \emph{after every iteration of the greedy algorithm used for solving the LP relaxation}~(\ref{eqn:LPrelax}). To avoid clutter, we have opted to show the feedback arrow in Option $3$ going directly to the `blue greedy algorithm box', instead of expanding the details of it. Option $1$, which is the most conservative, does not update the repair sequence at all and uses the initial $\Omega_l$'s until termination. Option $2$ represents a middle ground and updates the repair sequence \emph{after completion of the greedy algorithm used for solving the LP relaxation}~(\ref{eqn:LPrelax}). Implementation details of these three options are shown in Algorithm~\ref{alg:multirungreedy}.

%

\begin{figure}[H]
\centering
\includegraphics[trim = 80 250 90 210, clip, width = 0.80\columnwidth]{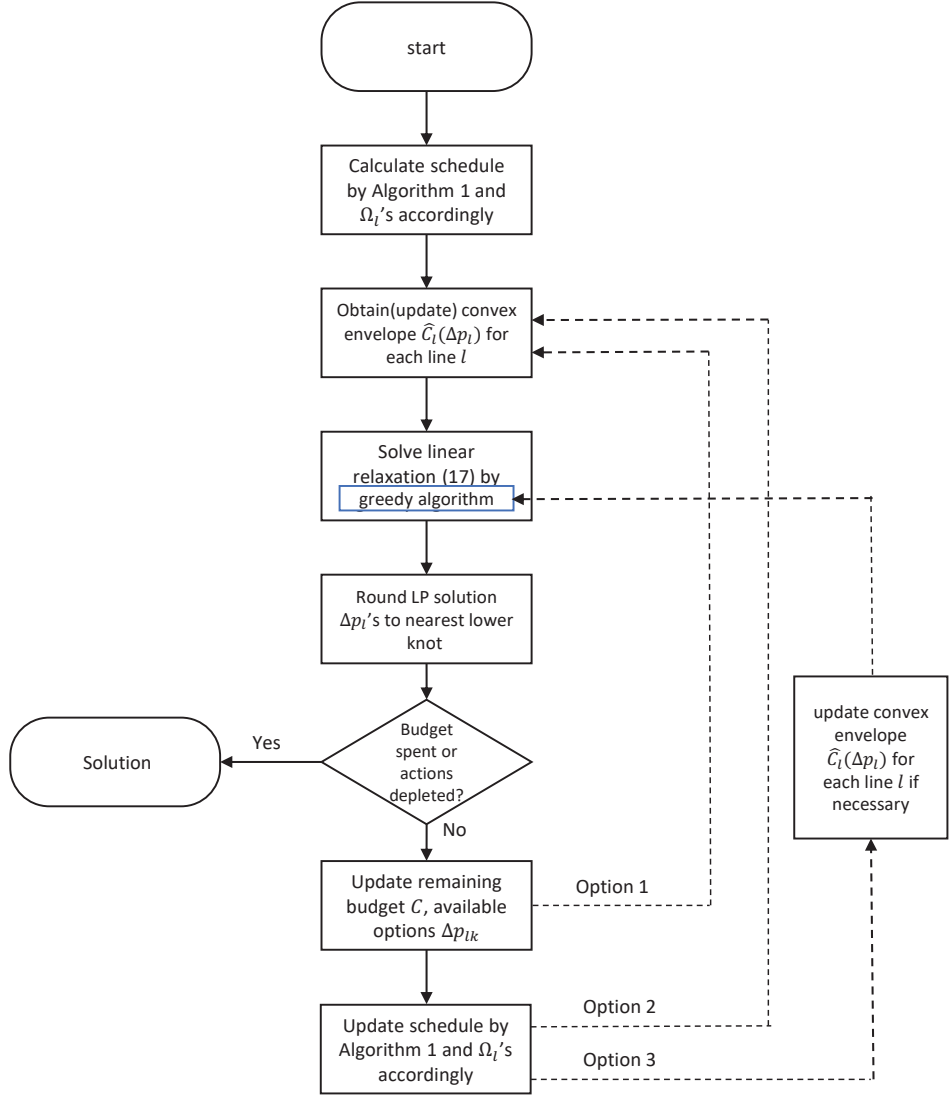}
\caption{Flowchart of the multi-run algorithm framework for solving the RPAHP. Note that updates of the convex envelope in Option $3$ are  necessary only once per backfill step.}
\label{fig:flowchart}
\end{figure}
\begin{algorithm}
\caption{Algorithms for restoration process aware distribution systems hardening.}
\label{alg:multirungreedy} 
\begin{algorithmic}[1]
	\STATE Compute the optimal sequence given the expected repair time $\vec{p}$, using Algorithm~\ref{alg:outree_merge};
    \STATE Calculate the weights $\Omega_l$ according to eqn.~(\ref{eqn:defOmega});
    \STATE Obtain the convex envelopes of costs $\hat{C}_l \left(\Delta p_{l} \right)$ with $\hat{K}_l$ pieces, along with the coefficients $\mu_{lk}, b_{lk}, \alpha_{lk}$ and $\beta_{lk}$, for each edge $l \in  L^D$;
    \STATE $H \leftarrow \emptyset$; 
    \STATE $k_l \leftarrow 1, \;\, \forall l \in L^D$;
    \WHILE{true}
    \STATE find $l \in L^D \setminus H$ with largest $\frac{\Omega_l}{\mu_{l, k_l}}$;
    \STATE let $\Delta p_l = \beta_{lk_l}$ and calculate the current cost $\Lambda = \sum_{l \in L^D} \, \hat{C}_l \left(\Delta p_{l} \right)$;
    \IF {$\Lambda = C$}
    \STATE break;
    \ELSIF {$\Lambda > C$}
    \STATE $\Delta p_l = \beta_{l,k_l-1}$;
    \STATE \emph{Option 2 \& 3: Update the optimal sequence given the current expected repair time $\vec{p} - \Delta \vec{p}$} and then update $\Omega$'s.
    \STATE Update the convex envelope of cost $\hat{C}_l \left(\Delta p_{l} \right)$ for edge $l$ and then update the coefficients $\mu_{lk}, b_{lk}, \alpha_{lk}$ and $\beta_{lk}$, for each edge $l \in  L^D$;
    \ELSIF {$k_l = \lvert \hat{K}_l \rvert$}
    \STATE $\Delta p_l = \beta_{l,k_l-1}$;
    \STATE $H \leftarrow \{ H, l \}$;
    \ELSE
    \STATE $k_l = k_l + 1$;
    \STATE \emph{Option 3: Update the optimal sequence given the current expected repair time $\vec{p} - \Delta \vec{p}$} and then update $\Omega$'s.
    \STATE continue;
    \ENDIF
    \IF {$\lvert H \rvert = \lvert L^D \rvert$}
    \STATE break;
    \ENDIF
    \ENDWHILE
\end{algorithmic}
\end{algorithm}
%
%
%
\section{Case studies}
\label{sec:case}
\subsection{IEEE 13 node test feeder}
%
We first test the MILP and heuristic approaches discussed in the previous section on the IEEE $13$ node test feeder with randomly generated $C_l$'s and two different budgets. Values of $\mathbb{E}[f(\cdot])$ in this section were computed using Monte Carlo simulations assuming an independent geometric distribution for each $p_l$. With a budget of $C=5$, hardening actions did not result in different repair schedules and both the MILP and heuristic approaches (all three options) yielded identical results, as shown in~Table~\ref{tab:hardenresult5}. With a budget of $C = 8$, even though the hardening actions suggested by the MILP and heurisitic approaches (all three options) differ for two edges, as shown in Table~\ref{tab:hardenresult8}, the objective values obtained from the greedy algorithm, both for $\mathbb{E}[f(\cdot)]$ and its upper bound $f(\mathbb{E}[\cdot])$, are very close to those provided by the MILP formulation. In fact, the ratio of the $f(\mathbb{E}[\cdot])$ measure from the greedy algorithm to the $\mathbb{E}[f(\cdot)]$ measure from the MILP algorithm is approximately $1.03$ for $C=5$ and $1.04$ for $C=8$ (note that this ratio captures the worst case performance loss, including the effect of upper bounding the true objective function using Jensen's inequality). 
\begin{table}[h]
\centering
\begin{tabular}{c|cc}
edge $l$ & $\Delta p_l$ by MILP & $\Delta p_l$ by Greedy Algorithms \\ \hline
671-680 & 0.6    & 0.6    \\
650-632 & 0.6  & 0.6 \\
671-684 & 0.4  & 0.4 \\
645-646 & 0.4 & 0.4 \\
684-652 & 0.2    & 0.2 \\
632-645 & 0.8 & 0.8 \\
632-633 & 0.8  & 0.8 \\
633-634 & 0    & 0 \\
632-671 & 0.4  & 0.4 \\
671-692 & 0    & 0 \\
692-675 & 0    & 0 \\
684-611 & 0.2  & 0.2 \\  \hline
$f(\mathbb{E}[\cdot])$ & 15.368 & \boxed{15.368} \\
$\mathbb{E}[f(\cdot)]$ & \boxed{14.950} & 14.950
\end{tabular}
\caption{Comparison of hardening results on the IEEE $13$ node test feeder with a budget of $C = 5$.} 
\label{tab:hardenresult5}
\end{table}
\begin{table}[h],
\centering
\begin{tabular}{c|cc}
edge $l$ & $\Delta p_l$ by MILP & $\Delta p_l$ by Greedy Algorithms \\ \hline
671-680 & 0.6    & 0.6    \\
650-632 & 0.6  & 0.6 \\
671-684 & 0.2  & 0.4 \\
645-646 & 0.4 & 0.4 \\
684-652 & 0.2    & 0.2 \\
632-645 & 1.0 & 0.8 \\
632-633 & 0.8  & 0.8 \\
633-634 & 0    & 0.5 \\
632-671 & 1.4  & 1.4 \\
671-692 & 0    & 0.2 \\
692-675 & 0    & 0 \\
684-611 & 0.2  & 0.4 \\  \hline
$f(\mathbb{E}[\cdot])$ & 14.876 & \boxed{14.917} \\
$\mathbb{E}[f(\cdot)]$ & \boxed{14.300} & 14.498
\end{tabular}
\caption{Comparison of hardening results on the IEEE $13$ node test feeder with a budget of $C=8$.}
\label{tab:hardenresult8}
\end{table}

Next, we varied the hardening budget from $0$ to $20$. These results are summarized in Fig.~\ref{fig:relation}. In each case, all three options within the heuristic framework produced identical solutions. The MILP and heuristic approaches yielded almost identical results when using the $f(\mathbb{E}[\cdot])$ measure so that their plots almost overlap. The plots corresponding to the $\mathbb{E}[f(\cdot)]$ measure are also very close, considering the errors introduced by Monte Carlo simulations. Expectedly, the aggregate harm decreases (resilience increases) as the hardening budget increases, but the returns indicate a diminishing trend. 
\begin{figure}[H]
\centering
\scalebox{0.42}{\input{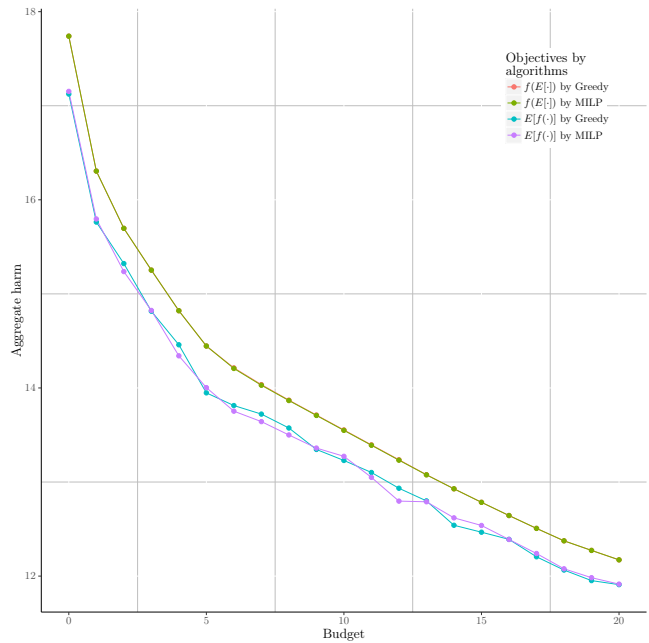}}
\caption{Comparison of hardening results on the IEEE $13$ node test feeder with a varying budget.}
\label{fig:relation}
\end{figure}
\subsection{IEEE 37 node test feeder}
Next, we ran our algorithms on one instance of the IEEE $37$ node test feeder~\cite{kersting2001radial}. Since the running time of the MILP formulation increases exponentially with network size, we allocated a time budget of ten hours. In contrast, all three heuristic options yielded a solution within seconds. Table~\ref{tab:37hardenresult} shows the edges for which the MILP and heuristic  approaches produced different hardening results.

%
\begin{table}[htbp]
\centering
\begin{tabular}{c|cccc}
edge $l$               & MILP(10 hours) & Option 1 & Option 2 & Option 3 \\ \hline
(744, 729))            & 0.8            & 0        & 0        & 0        \\
(702, 703)             & 0.2            & 0.2      & 0.2      & 0.3      \\
(708, 733)             & 0              & 0.3      & 0.3      & 0.3      \\
(702, 705)             & 2.1            & 0.4      & 0.4      & 2.1      \\
(734, 737)             & 0              & 0        & 0.2      & 0.2      \\
(708, 732)             & 0              & 0.2      & 0.2      & 0        \\
(734, 710)             & 0.1            & 1.4      & 1.7      & 0.1      \\ \hline
$f(\mathbb{E}[\cdot])$ & 843.08         & 842.04   & 843.84   & 837.93   \\
$\mathbb{E}[f(\cdot)]$ & 672.21         & 667.35   & 667.42   & 666.09                            
\end{tabular}
\caption{Comparison of reduction in repair times, $\Delta p_l$'s, due to hardening on the IEEE $37$ node test feeder with a budget of $C = 200$.}
\label{tab:37hardenresult}
\end{table}
In order to compare the performances of the three options within the heuristic framework, we then varied the hardening budget from $1$ to $400$. Fig.~\ref{fig:compoptions} summarizes these results. Intuitively, when the hardening budget is small, we expect the three options to behave similarly since reductions in repair times, if any, are likely to be small enough so as not to trigger a change in the repair schedule, rendering the `update schedule' step in Fig.~\ref{fig:flowchart} moot. Similarly, when the hardening budget is large, all three options should behave similarly since most edges are likely to be hardened to the maximum degree possible at the end of the first run, and in this case, the `update schedule' step would be inconsequential since the algorithm would tend to terminate after the first run. As can be observed from Fig.~\ref{fig:compoptions}, the three options indeed behave similarly at either end of the budget spectrum, but produce somewhat different results for intermediate budgets (in the range $21-303$), although the differences are not appreciable. For a better understanding of the average performance of the three options, we conducted $200$ trials with randomly generated hardening cost functions and a budget of $C=282$. Option $1$ turned out to be the best on $53$ trials, option 2 on $69$ trials, and  option 3 on $158$ trials. Note that the numbers do not add up to $200$ since ties were counted while ranking the three options. All three options produced identical results on $19$ trials. However, the largest difference that we observed between any two options was $4.7\%$. Consequently, we recommend Option $1$ as the preferred option if ease of implementation and fastest computational performance are desired.

\begin{figure}[H]
\centering
\includegraphics[width = 0.5\columnwidth]{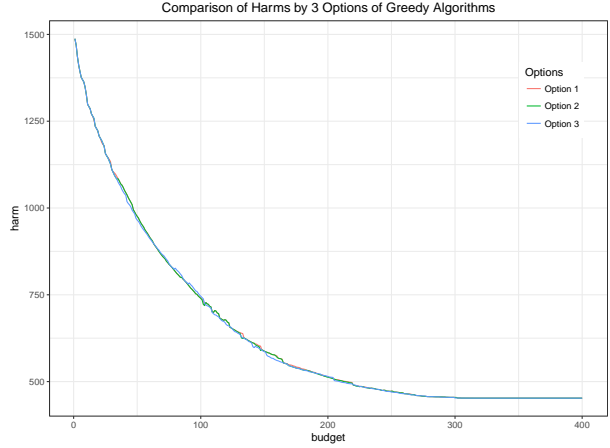}
\caption{Comparison of harms by $3$ options of the heuristic framework on the IEEE $37$ node test feeder.}
\label{fig:compoptions}
\end{figure}
\subsection{IEEE 8500 node test feeder}
Finally, we tested the performance of the heuristic algorithm (option $1$ only) on one instance of the IEEE $8500$ node test feeder medium voltage subsystem \cite{arritt2010the} containing roughly $2500$ edges. We did not even attempt to solve the ILP model in this case, but the heuristic algorithm took just $9.36$ secs. to solve this instance. Results are shown in Fig.~\ref{fig:8500budget}.
\begin{figure}[H]
\centering
\scalebox{0.4}{\input{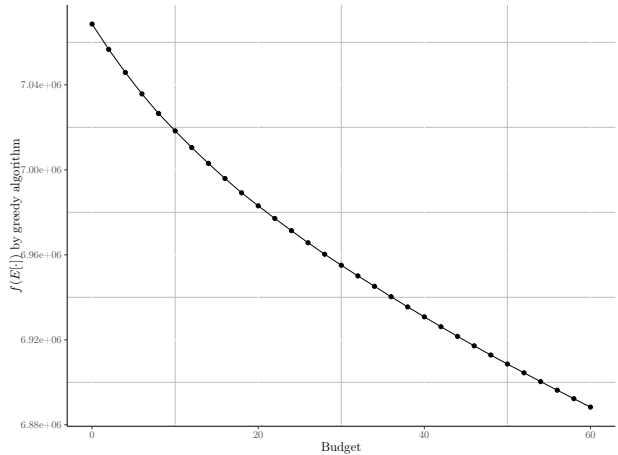}}
\caption{Illustrating the trend of diminishing returns with an increasing budget on the IEEE $8500$ node test feeder.}
\label{fig:8500budget}
\end{figure}
\section{Conclusions}
In this paper, we investigated the problem of optimally hardening a distribution network to be resilient to natural disasters. Motivated by recent work on resilient infrastructure systems in civil engineering, we proposed an equivalent definition of resilience with a clear physical interpretation. This allowed us to integrate the post disaster restoration process and the planning stage component hardening decision process into one problem, which, we argued, is necessary since both aspects ultimately contribute to system resilience. This is a major departure from most current research where the two aspects of resilience are treated separately. We first modeled the restoration problem as an MILP and the hardening problem as a stochastic program, which was then reformulated as a robust deterministic model using Jensen's inequality for the sake of computational tractability. Finally, we unified the sequencing and hardening aspects and proposed an integrated MILP model as well as a multi-run greedy algorithmic framework. The expected component repair times, which are updated during the algorithm when necessary, are used to generate an optimal single crew repair sequence, based on which hardening decisions are made sequentially in a greedy manner. Simulations on IEEE standard test feeders show that the heuristic approach provides near-optimal solutions efficiently even for large networks. 

\bibliography{hardening_ref}

\begin{thebibliography}{10}

\bibitem{niac2010framework}
A framework for establishing critical infrastructure resilience goals: Final
  report and recommendations.
\newblock Technical report, 2010.

\bibitem{adolphson1973optimal}
D~Adolphson and T~Ch Hu.
\newblock Optimal linear ordering.
\newblock {\em SIAM Journal on Applied Mathematics}, 25(3):403--423, 1973.

\bibitem{arritt2010the}
R.~F. Arritt and R.~C. Dugan.
\newblock The ieee 8500-node test feeder.
\newblock In {\em IEEE PES T D 2010}, pages 1--6, April 2010.

\bibitem{billinton1985distributional}
R~Billinton and E~Wojczynski.
\newblock Distributional variation of distribution system reliability indices.
\newblock {\em IEEE Transactions on Power Apparatus and Systems},
  (11):3151--3160, 1985.

\bibitem{brucker2007scheduling}
Peter Brucker.
\newblock {\em Scheduling algorithms}, volume~3.
\newblock Springer, 2007.

\bibitem{bruneau2003framework}
Michel Bruneau, Stephanie~E Chang, Ronald~T Eguchi, George~C Lee, {O'Rourke,
  Thomas D}, Andrei~M Reinhorn, Masanobu Shinozuka, Kathleen Tierney, William~A
  Wallace, and Detlof von Winterfeldt.
\newblock A framework to quantitatively assess and enhance the seismic
  resilience of communities.
\newblock {\em Earthquake spectra}, 19(4):733--752, 2003.

\bibitem{coffrin2014transmission}
Carleton Coffrin and Pascal Van~Hentenryck.
\newblock Transmission system restoration: Co-optimization of repairs, load
  pickups, and generation dispatch.
\newblock In {\em Power Systems Computation Conference (PSCC), 2014}, pages
  1--8. IEEE, 2014.

\bibitem{EPRI:2013tuba}
{Electric Power Research Institute}.
\newblock {Enhancing Distribution Resiliency: Opportunities for Applying
  Innovative Technologies}.
\newblock pages 1--20, January 2013.

\bibitem{eotp2013resiliency}
{Executive Office of the President}.
\newblock Economic benefits of increasing electric grid resilience to weather
  outages.
\newblock Technical report, 2013.

\bibitem{garey1976some}
Michael~R Garey, David~S. Johnson, and Larry Stockmeyer.
\newblock Some simplified {NP-complete} graph problems.
\newblock {\em Theoretical computer science}, 1(3):237--267, 1976.

\bibitem{horst2013global}
Reiner Horst and Hoang Tuy.
\newblock {\em Global optimization: Deterministic approaches}.
\newblock Springer Science \& Business Media, 2013.

\bibitem{doe2012derecho}
{Infrastructure Security and Energy Restoration, Office of Electricity Delivery
  and Energy Reliability, U.S. Department of Energy}.
\newblock A review of power outages and restoration following the {June} 2012
  {Derecho}.
\newblock Technical report, Aug 2012.

\bibitem{jensen1906fonctions}
Johan Ludwig William~Valdemar Jensen.
\newblock Sur les fonctions convexes et les in{\'e}galit{\'e}s entre les
  valeurs moyennes.
\newblock {\em Acta mathematica}, 30(1):175--193, 1906.

\bibitem{kameshwaran2009nonconvex}
S~Kameshwaran and Y~Narahari.
\newblock Nonconvex piecewise linear knapsack problems.
\newblock {\em European Journal of Operational Research}, 192(1):56--68, 2009.

\bibitem{kersting2001radial}
W.~H. Kersting.
\newblock Radial distribution test feeders.
\newblock In {\em 2001 IEEE Power Engineering Society Winter Meeting.
  Conference Proceedings (Cat. No.01CH37194)}, volume~2, pages 908--912 vol.2,
  2001.

\bibitem{eidinger2012christchurch}
Alexis Kwasinski, John Eidinger, Alex Tang, and Christophe Tudo-Bornarel.
\newblock Performance of electric power systems in the 2010 - 2011
  {Christchurch, New Zealand, Earthquake Sequence}.
\newblock {\em Earthquake Spectra}, 30(1):205--230, 2014.

\bibitem{ma2016resilience}
Shanshan Ma, Bokan Chen, and Zhaoyu Wang.
\newblock Resilience enhancement strategy for distribution systems under
  extreme weather events.
\newblock {\em IEEE Transactions on Smart Grid}, 2016.

\bibitem{mili2011taxonomy}
Lamine Mili and Northern~Virginia Center.
\newblock Taxonomy of the characteristics of power system operating states.
\newblock In {\em 2nd NSF-VT Resilient and Sustainable Critical Infrastructures
  (RESIN) Workshop, Tucson, AZ, Jan}, pages 13--15, 2011.

\bibitem{nagarajan2016optimal}
Harsha Nagarajan, Emre Yamangil, Russell Bent, Pascal Van~Hentenryck, and Scott
  Backhaus.
\newblock Optimal resilient transmission grid design.
\newblock In {\em Power Systems Computation Conference (PSCC), 2016}, pages
  1--7. IEEE, 2016.

\bibitem{nerc2014sandy}
NERC.
\newblock {Hurricane Sandy} event analysis report.
\newblock Technical report, January 2014.

\bibitem{nowicki1990survey}
Eugeniusz Nowicki and Stanis{\l}aw Zdrza{\l}ka.
\newblock A survey of results for sequencing problems with controllable
  processing times.
\newblock {\em Discrete Applied Mathematics}, 26(2-3):271--287, 1990.

\bibitem{nurre2012restoring}
Sarah~G Nurre, Burak Cavdaroglu, John~E Mitchell, Thomas~C Sharkey, and
  William~A Wallace.
\newblock Restoring infrastructure systems: An integrated network design and
  scheduling {(INDS)} problem.
\newblock {\em European Journal of Operational Research}, 223(3):794--806,
  2012.

\bibitem{obama2013preparing}
Barack Obama.
\newblock Preparing the united states for the impacts of climate change.
\newblock {\em Executive Order}, 13653:66819--66824, 2013.

\bibitem{omer2013resilience}
Mayada Omer.
\newblock {\em The Resilience of Networked Infrastructure Systems: Analysis and
  Measurement}, volume~3.
\newblock World Scientific, 2013.

\bibitem{orourke2007critical}
{O'Rourke, Thomas D}.
\newblock Critical infrastructure, interdependencies, and resilience.
\newblock {\em {The Bridge}}, 37(1):22, 2007.

\bibitem{patton1979probability}
AD~Patton.
\newblock Probability distribution of transmission and distribution reliability
  performance indices.
\newblock In {\em Reliability Conference for Electric Power Industry}, pages
  120--122, 1979.

\bibitem{Reed:2009ekbaca}
D.~A. Reed, K.~C. Kapur, and R.~D. Christie.
\newblock {Methodology for Assessing the Resilience of Networked
  Infrastructure}.
\newblock {\em IEEE Systems Journal}, 3(2):174--180, June 2009.

\bibitem{rollins2007hardening}
Martin Rollins.
\newblock The hardening of utilitiy lines -- implications for utility pole
  design and use.
\newblock Technical report, {North American Wood Pole Council}, 2007.

\bibitem{romero2015seismic}
Natalia Romero, Linda~K Nozick, Ian Dobson, Ningxiong Xu, and Dean~A Jones.
\newblock Seismic retrofit for electric power systems.
\newblock {\em Earthquake Spectra}, 31(2):1157--1176, 2015.

\bibitem{shioura2016application}
Akiyoshi Shioura, Natalia~V Shakhlevich, and Vitaly~A Strusevich.
\newblock Application of submodular optimization to single machine scheduling
  with controllable processing times subject to release dates and deadlines.
\newblock {\em INFORMS Journal on Computing}, 28(1):148--161, 2016.

\bibitem{simic2008global}
Slavko Simic.
\newblock On a global upper bound for {Jensen}'s inequality.
\newblock {\em Journal of Mathematical Analysis and Applications},
  343(1):414--419, 2008.

\bibitem{sinha1979multiple}
Prabhakant Sinha and Andris~A Zoltners.
\newblock The multiple-choice knapsack problem.
\newblock {\em Operations Research}, 27(3):503--515, 1979.

\bibitem{tan2017scheduling}
Y.~{Tan}, F.~{Qiu}, A.~K. {Das}, D.~S. {Kirschen}, P.~{Arabshahi}, and
  J.~{Wang}.
\newblock {Scheduling Post-Disaster Repairs in Electricity Distribution
  Networks}.
\newblock {\em ArXiv e-prints arXiv:1702.08382}, February 2017.

\bibitem{gridwise2013resilience}
{The GridWise Alliance}.
\newblock Improving electric grid reliability and resilience: Lessons learned
  from {Superstorm Sandy} and other extreme events.
\newblock Technical report, July 2013.

\bibitem{wangresearch}
Y.~Wang, C.~Chen, J.~Wang, and R.~Baldick.
\newblock Research on resilience of power systems under natural disasters - a
  review.
\newblock {\em Power Systems, IEEE Transactions on}, PP(99):1--10, 2015.

\bibitem{yamangil2014designing}
Emre Yamangil, Russell Bent, and Scott Backhaus.
\newblock Designing resilient electrical distribution grids.
\newblock {\em Proceedings of the 29th Conference on Artificial Intelligence,
  Austin, Texas}, 2015.

\bibitem{yamangil2015resilient}
Emre Yamangil, Russell Bent, and Scott Backhaus.
\newblock Resilient upgrade of electrical distribution grids.
\newblock In {\em Twenty-Ninth AAAI Conference on Artificial Intelligence},
  2015.

\bibitem{yuan2016robust}
Wei Yuan, Jianhui Wang, Feng Qiu, Chen Chen, Chongqing Kang, and Bo~Zeng.
\newblock Robust optimization-based resilient distribution network planning
  against natural disasters.
\newblock {\em IEEE Transactions on Smart Grid}, 7(6):2817--2826, 2016.

\end{thebibliography}

\appendix
\section{Proof of optimality of the greedy algorithm for solving the LP relaxation~(\ref{eqn:LPrelax})}
\label{appdx:proofgreedyoptimality}
\begin{proof}
Suppose the greedy algorithm terminates at some iteration $T$ and that it is not optimal. In that case, there must be some edge $l$ and some piece of the hardening cost function $k$ which we could have chosen at some subsequent iteration, say $t = T^+$, such that, if this was included in the optimal solution, it would improve the objective value without violating the budget constraint. Denote the $l$ and $k$ for this edge by $l = l_{T^+}$ and $k = k_{T^+}$.

First, let $l_{T^+}$ be such that $l_{T^+} \in \mathcal{R}_T$, where $\mathcal{R}_T$ denotes the set of edges chosen for hardening till iteration $T$, i.e., at least one of the $\left(\Delta p_{lk}: \forall l \in \mathcal{R}_T\right)$'s is greater than $0$. In essence, what we are saying is that, $l_{T^+}$ is an edge which was already chosen for hardening until iteration $T$, but, we could actually have afforded a greater  degree of hardening, $k_{T^+}$, compared to where the algorithm terminated. Recall that the greedy algorithm chooses $l$ and $k$ sequentially based on the ratios $\left\{ \frac{\Omega_l}{\mu_{lk}} \right\}$, sorted in a descending order, and therefore, for some $l$, smaller $k$'s are chosen first. If this were possible, it would of course improve the objective value, but the budget constraint would be violated since the algorithm terminated with the budget being met exactly and hardening the edge $l_{T^+}$ to a greater degree $k_{T^+}$ than what was chosen till iteration $T$ would necessarily drive up the hardening cost for that edge (this follows from convexity of the $\hat{C}_l$'s). This proves that it is not feasible to increase the degree of hardening of an edge which has already been chosen for some degree of hardening by the greedy algorithm until termination, without violating the budget constraint.

Next, let $l_{T^+}$ be such that $l_{T^+} \not\in \mathcal{R}_T$. In essence, what we are saying is that, $l_{T^+}$ is not an edge which was already chosen for hardening until iteration $T$, but, including this edge in the optimal solution, possibly at the expense of one or more edges (this has to be the case since adding this edge for hardening without taking out other edges would obviously violate the budget constraint) which were chosen for hardening till iteration $T$, could improve the objective value without violating the budget constraint. Let us assume that this `addition-and-subtraction' strategy does not violate the budget constraint. We now show that this strategy cannot lead to an increase in the objective value. Let $\mathcal{Q}_T \subseteq \mathcal{R}_T$ denote some subset of edges chosen for hardening till iteration $T$, which we could replace with $l_{T^+}$. For the addition-and-subtraction strategy to be better, we need to show that:
\begin{align}
  \Omega_{l_{T^+}} \Delta p_{l_{T^+}} &> \sum_{l \in \mathcal{Q}_T} \Omega_{l} \Delta p_{l}
\label{greedyProofEqn0}
\end{align}
However, we argue below that the addition-and-subtraction strategy is not better, and therefore what holds is that:
\begin{align}
 \Omega_{l_{T^+}} \Delta p_{l_{T^+}} &\leq \sum_{l \in \mathcal{Q}_T} \Omega_{l} \Delta p_{l}
\label{greedyProofEqn1}
\end{align}
Without any loss of generality, we assume that iteration $T^+$ is the first time after $T$ that edge $l_{T^+}$ has been chosen. If this is not the case, the arguments below apply to that iteration, after $T$, when $l_{T^+}$ was first chosen. Since the hardening selection process for any edge kicks off with $k=1$, it follows that $\Delta p_{l_{T^+}} = \Delta p_{l_{T^+}(k=1)}$, where $\Delta p_{l_{T^+}(k=1)} \in \left[ \alpha_{l_{T^+}(k=1)}, \beta_{l_{T^+}(k=1)}\right]$. Let us assume that $\Delta p_{l_{T^+}(k=1)}$ can be set to its maximum possible value, $\beta_{l_{T^+}(k=1)}$, without violating the budget constraint. Doing so would obviously improve the objective value the most. Instead of proving eqn.~(\ref{greedyProofEqn1}), it is therefore enough to show that:
\begin{align}
\Omega_{l_{T^+}} \beta_{l_{T^+}(k=1)}  &\leq  \sum_{l \in \mathcal{Q}_T} \Omega_{l} \Delta p_{l}
\label{greedyProofEqn2}
\end{align}
%
%
%
%
%
Since all $\Omega_l$'s and $\Delta p_{lk}$'s are positive, it suffices to show that: 
\begin{align}
\Omega_{l_{T^+}} \beta_{l_{T^+}(k=1)}  &\leq  \Omega_{l} \, \Delta p_{l}, \ \mbox{for any } l \in \mathcal{Q}_T ,
\label{greedyProofEqn5}
\end{align}
since, if this is true, so must be eqns.~(\ref{greedyProofEqn2}) and (\ref{greedyProofEqn1}). Let us pick some $l_T \in \mathcal{Q}_T$. Suppose that the `largest' hardening degree chosen by the greedy algorithm for $l_T$ until termination is $k_T$. Since the greedy algorithm picked $(l_T,k_T)$ before $(l_{T^+},1)$, it must be true that: 
\begin{align}
\frac{\Omega_{l_{T^+}}} {\mu_{l_{T^+}(k=1)}} &\leq \frac{\Omega_{l_{T}}} {\mu_{l_{T}k_T}}
\label{greedyProofEqn6}
\end{align}
Let $\gamma$ denote the intercept value of the $k_T^{th}$ piece of $\hat{C}_{l_T}$ on the $y$-axis. Clearly, $\gamma \leq 0$. Noting that the lower breakpoints of the first piece of all $\hat{C}_l$'s are zero, as are the values of $\hat{C}_l$ at those lower breakpoints,
\begin{align}
\frac{\Omega_{l_{T^+}}} {\mu_{l_{T^+}(k=1)}} &\leq \frac{\Omega_{l_{T}}} {\mu_{l_{T}k_T}} \\
\Rightarrow \Omega_{l_{T^+}} \left[ \frac{\beta_{l_{T^+}(k=1)}}{\hat{C}_{l_{T^+}}\left(\beta_{l_{T^+}(k=1)}\right)} \right]  &\leq \Omega_{l_{T}} \left[ \frac{\Delta p_{l_T k_T}}{\hat{C}_{l_{T}}\left(\Delta p_{l_T k_T}\right) - \gamma} \right] \\
&\leq \Omega_{l_{T}} \left[ \frac{\Delta p_{l_T k_T}}{\hat{C}_{l_{T}}\left(\Delta p_{l_T k_T}\right)} \right] \\
&:= \Omega_{l_{T}} \left[ \frac{\Delta p_{l_T}}{\hat{C}_{l_{T}}\left(\Delta p_{l_T}\right)} \right]
\label{greedyProofEqn7}
\end{align}
where the last equality follows from eqn.~(\ref{greedyAlgoVariableRecovery}) and the fact that $k_T$ is the largest hardening degree for $l_T$. By assumption, the addition-and-subtraction strategy does not cause any violation of the budget constraint, which implies that $\hat{C}_{l_{T^+}}\left(\beta_{l_{T^+}(k=1)}\right) \leq \hat{C}_{l_{T}}\left(\Delta p_{l_T}\right)$. It follows therefore from eqn.~(\ref{greedyProofEqn7}) that $\Omega_{l_{T^+}} \beta_{l_{T^+}(k=1)}  \leq  \Omega_{l_T} \, \Delta p_{l_T}$, thereby proving that inequality (\ref{greedyProofEqn5}) holds, and consequently inequality (\ref{greedyProofEqn1}). We have thus shown that it is not feasible to increase the objective value by choosing to harden an edge which has not already been chosen for some degree of hardening by the greedy algorithm until termination.

This completes the proof of optimality of the greedy algorithm.
\end{proof}

\end{document}